\newcommand{\argmin}{\mathop{\rm argmin}}
\newcommand{\less}{\leqslant}
\newcommand{\gre}{\geqslant}
\newcommand{\wtilde}{\widetilde}
\newcommand{\defn}{\ensuremath{: \, =}}
\newcommand{\real}{\ensuremath{\mathbb{R}}}
\newcommand{\Exs}{\ensuremath{\mathbb{E}}}
\newcommand{\trace}{\operatorname{trace}}
\newtheorem{theorem}{Theorem}[section]
\newtheorem{lemma}[theorem]{Lemma}
\newtheorem{remark}[theorem]{Remark}
\title{Optimal Iterative Sketching with the Subsampled Randomized Hadamard Transform}
\author{
  Jonathan Lacotte\thanks{Equal contributions.} \\
  Department of Electrical Engineering\\
  Stanford University\\
  \texttt{lacotte@stanford.edu} \\
   \And
   Sifan Liu$^*$ \\
  Department of Statistics \\
  Stanford University \\
   \texttt{sfliu@stanford.edu} \\
   \AND
  Edgar Dobriban\\
  Department of Statistics\\
  University of Pennsylvania\\
  \texttt{dobriban@wharton.upenn.edu}
   \And
  Mert Pilanci\\
  Department of Electrical Engineering\\
  Stanford University\\
  \texttt{pilanci@stanford.edu}
}
\begin{document}


\maketitle

\begin{abstract}
Random projections or sketching are widely used in many algorithmic and learning contexts. Here we study the performance of iterative Hessian sketch for least-squares problems. By leveraging and extending recent results from random matrix theory on the limiting spectrum of matrices randomly projected with the subsampled randomized Hadamard transform, and truncated Haar matrices, we can study and compare the resulting algorithms to a level of precision that has not been possible before. Our technical contributions include a novel formula for the second moment of the inverse of projected matrices. We also find simple closed-form expressions for asymptotically optimal step-sizes and convergence rates. These show that the convergence rate for Haar and randomized Hadamard matrices are identical, and asymptotically improve upon Gaussian random projections. These techniques may be applied to other algorithms that employ randomized dimension reduction.
\end{abstract}

\section{Introduction}
\label{SectionIntroduction}

Random projections are a classical way of performing dimensionality reduction, and are widely used in many algorithmic and learning contexts, e.g.,~\cite{vempala2005random, mahoney2011randomized, woodruff2014sketching, drineas2016randnla} etc. In this work, we study the performance of the iterative Hessian sketch~\cite{pilanci2016iterative}, in the context of overdetermined least-squares problems
\begin{equation}
\label{EqnMain}
    x^* \defn \argmin_{x \in \real^d} \left\{f(x) \defn \frac{1}{2} \|Ax-b\|^2\right\}\,.
\end{equation}
Here $A \in \mathbb{R}^{n \times d}$ is a given data matrix with $n \gre d$ and $b \in \mathbb{R}^n$ is a vector of observations. For simplicity of notations, we assume throughout this work that $\text{rank}(A)=d$. We will leverage and extend recent results on the limiting spectral distributions of two classical subspace embeddings, random uniform projections and the subsampled randomized Hadamard transform (SRHT), to compare corresponding iterative Hessian sketch versions.

The iterative Hessian sketch (IHS) is an effective iterative method for solving least-squares~\cite{pilanci2015randomized, pilanci2016iterative, lacotte2020effective, sridhar2020lower} (and more general convex optimal optimization problems~\cite{pilanci2017newton}), and it aims to address the condition number dependency of standard iterative solvers as follows. Given step sizes $\{\mu_t\}$ and momentum parameters $\{\beta_t\}$, it computes the update
\begin{equation}
\label{EqnIHSUpdate}
    x_{t+1} = x_t - \mu_t H_t^{-1} \nabla f(x_t) + \beta_t (x_t - x_{t-1})\,,
\end{equation}
where the Hessian $H = A^\top A$ of $f$ is approximated by $H_t = A^\top S_t^\top S_t A$, and $S_0, \dots, S_t, \dots$ are i.i.d.~sketching (random) matrices with dimensions $m \times n$ and $m \ll n$. From now on, we refer to the i.i.d.~property of the sketching matrices as \emph{refreshed} matrices. 

There are many possible choices for the sketching matrices $S_t$, and this is critical for the performance of the IHS. A classical sketch is a matrix $S \in \mathbb{R}^{m \times n}$ with independent and identically distributed (i.i.d.) Gaussian entries $\mathcal{N}(0,m^{-1})$, for which the matrix multiplication $S A$ requires in general $\mathcal{O}(mnd)$ basic operations (using classical matrix multiplication). This is larger than the cost $\mathcal{O}(n d^2)$ of solving~\eqref{EqnMain} with direct methods when $m \gre d$. Another well-studied embedding is the (truncated) $m \times n$ \emph{Haar} matrix $S$, whose rows are orthonormal and with range uniformly distributed among the subspaces of $\real^n$ with dimension $m$. However, this requires time $\mathcal{O}(n m^2)$ to be formed, through a Gram-Schmidt procedure, which is also larger than $\mathcal{O}(nd^2)$. 

The SRHT~\cite{ailon2006approximate,sarlos2006improved} is another classical random orthogonal embedding. Due to the recursive structure of the Hadamard transform, the sketch $SA$ can be formed in $\mathcal{O}(nd \log m)$ time, so that the SRHT is often viewed as a standard reference point for comparing sketching algorithms. 
Moreover, for many applications, random projections with i.i.d.~entries perform worse compared to orthogonal projections~\cite{mahoney2011randomized, mahoney2016structural, drineas2016randnla}. More recently, this observation has also found some theoretical support in limited
contexts~\cite{dobriban2019asymptotics,yang2020reduce}. Works by~\cite{choromanski2017unreasonable} also showed the guaranteed improved performance in accuracy and/or speed. Consequently, along with computational considerations, these results favor the SRHT over Gaussian projections. 

Our goal in this work is to design an optimal version of the IHS with SRHT and Haar embeddings. For this purpose, it is necessary to have a tight characterization of the spectral properties of the matrix $U^\top S^\top S U$ where $U$ is an $n \times d$ partial orthogonal matrix (see, e.g.,~\cite{lacotte2019faster}). With Gaussian embeddings, the matrix $U^\top S^\top S U$ has the well-studied Wishart distribution, see e.g.,~\cite{marchenko1967distribution,anderson2010introduction,tao2012topics,bai2009spectral, couillet2011random,yao2015large}. In fact,~\cite{lacotte2019faster} provided an optimal IHS with Gaussian embeddings, and showed that the best achievable error $\|A(x_t-x^*)\|^2$ scales as $(d/m)^t$. However, a similar analysis does not work for SRHT and Haar sketches. To make progress on this problem, we aim to leverage powerful tools from asymptotic random matrix theory, and \emph{we consider the asymptotic regime where we let the relevant dimensions go to infinity}.

Our technical analysis is based on asymptotic random matrix theory, see e.g.,~ \cite{anderson2010introduction,tao2012topics,bai2009spectral, couillet2011random,yao2015large} etc. Classical results such as the Marchenko-Pastur law do not address well the case of the SRHT, and we leverage recent results on \emph{asymptotically liberating sequences} established by~\cite{anderson2014asymptotically} (see also~\cite{tulino2010capacity} for prior work). Further, we are inspired by the work of~\cite{dobriban2019asymptotics}, who, to our knowledge, first leveraged these results to study the SRHT. However, their results are limited to one-step "sketch-and-solve" methods, and do not address the iterative Hessian sketch. Moreover, while we build on their results, we also need to extend them significantly: for instance, we need to derive the second moment formula for $\theta_{2,h}$ in \eqref{EqnTraceExpressions}, which is novel and non-trivial to establish.

Beyond the IHS, there exist other randomized pre-conditioning methods~\cite{avron2010blendenpik, drineas2011faster, meng2014lsrn, rokhlin2008fast} for solving least-squares, which are based on the SRHT (or closely related sketches) which address effectively the condition number dependency of iterative solvers. Besides least-squares, SRHT sketches are widely used for a wide range of applications across numerical linear algebra, statistics and convex optimization, such as low-rank matrix factorization~\cite{halko2011finding, witten2015randomized}, kernel regression~\cite{yang2017randomized}, random subspace optimization~\cite{lacotte2019high}, or sketch and solve linear regression~\cite{dobriban2019asymptotics}, see the reviews above for applications. Hence, a refined analysis of the SRHT, including our specific technical contributions, may also lead to better algorithms in these fields.

Throughout the paper, we will consistently use the following assumptions and notations for the aspect ratios, $\gamma \defn \lim_{n,d \to \infty} \frac{d}{n} \in (0,1)$, $\xi \defn \lim_{n,m \to \infty} \frac{m}{n} \in (\gamma, 1)$ and $\rho_g \defn \frac{\gamma}{\xi} \in (0,1)$, and the subscript $g$ (resp.~$h$) will refer to Gaussian-related (resp.~Haar and Hadamard-related) quantities. 
We use the notations $\|z\| \equiv \|z\|_2$ for the Euclidean norm of a real vector $z$, $\|M\|_2$ for the operator norm of a matrix $M$, and $\|M\|_F$ for its Frobenius norm. For a sequence of iterates $\{x_t\}$, we denote the error vector $\Delta_t \defn U^\top A(x_t - x^*)$, where $U$ is the $n \times d$ matrix of left singular vectors of $A$. In particular, we have that $\|\Delta_t\|^2 = \|A(x_t-x^*)\|^2$.

\subsection{Overview of our results, contributions and questions left open}

All our contributions hold in the asymptotic limit $n,d,m \to \infty$, and under the aforementioned assumption that the aspect ratios $(d/n)$ and $(m/n)$ have finite limits.

We work with the matrix $U^\top S^\top S U$, where $U$ is an $n \times d$ matrix with orthonormal columns and $S$ is an $m \times n$ Haar or SRHT matrix. Our first results concern Haar projections (Section \ref{SectionHaar}). By leveraging results about their limiting spectral distributions, and after some calculations with Stieljes transforms (defined below) we provide the following new trace formula (see Lemma \ref{TheoremHaarlsd}):
\begin{align*}
    & \theta_{2,h} \defn \lim_{n \to \infty} \frac{1}{d} \, \text{tr}\,\Exs\left[ (U^\top S^\top S U)^{-2}\right] \, = \, \frac{(1-\gamma)(\gamma^2 + \xi - 2\gamma \xi)}{(\xi - \gamma)^3}\,.
\end{align*}
As an application, we characterize explicitly the optimal step sizes $\mu_t$ and momentum parameters $\beta_t$ of the IHS with Haar embeddings (Theorem \ref{TheoremIHSHaar}). We emphasize that the optimal parameters have asymptotically closed form for any data matrix $A$, unlike for certain other propular methods such as gradient descent, which can be useful in practice.
With these optimal parameters, we find that at \emph{any} time step $t \gre 1$ (Theorem \ref{TheoremIHSHaar}),
\begin{align}
    \lim_{n \to \infty} \, \frac{\Exs\|\Delta_t\|^2}{\|\Delta_0\|^2} = \rho_h^t\,,
\end{align}
where the convergence rate $\rho_h$ is given by $\rho_h \defn \rho_g \cdot \frac{\xi (1-\xi)}{\gamma^2 + \xi - 2 \xi \gamma}$, and always satisfies $\rho_h < \rho_g$. By comparing with the prior work \cite{lacotte2019faster}, this  implies that Haar embeddings have uniformly better performance than Gaussian ones.  Further, as an immediate consequence of Theorem~2 in~\cite{lacotte2019faster}, we obtain that the optimal momentum parameters $\beta_t$ are equal to $0$, that is, Heavy-ball momentum does not accelerate the algorithm with refreshed Haar embeddings (Theorem \ref{TheoremIHSHaar} and following discussion). Thus, we are able to characterize explicitly the optimal version of the IHS with Haar embeddings.

Our next results concern SRHT sketches (Section \ref{SectionSRHT}). We prove that under the additional mild assumption on the initial error $\Delta_0$ that $\Exs[\Delta_0 \Delta_0^\top] = d^{-1}I_d$, the IHS with SRHT embeddings also has rate of convergence $\rho_h$ (Theorem \ref{TheoremIHSSRHT}). This relies on novel formulas for the first two inverse moments of SRHT sketches (Lemma \ref{TheoremSRHTlsd}). Consequently, \emph{SRHT matrices  uniformly outperform Gaussian embeddings}. Then, we confirm numerically the above theoretical statements (Section \ref{ns}).

We finally analyze the computational complexity of our method, in comparison to some standard randomized pre-conditioned solvers~\cite{rokhlin2008fast} for dense, ill-conditioned least-squares. We show that in our infinite-dimensional regime, we improve by a factor $\log d$ (Section \ref{comp}).

Importantly, we specifically focus on the IHS with refreshed i.i.d.~embeddings. An immediate variant of the IHS uses the same update~\eqref{EqnIHSUpdate}, but with a fixed embedding $S$ drawn only once at the first iteration, which is appealing in practice. In a concurrent paper~\cite{lacotte2020optimal} more recent to the initial version of the present work, it has been shown that, in the same asymptotic regime, the IHS with a fixed SRHT embedding achieves a better convergence rate. Thus, we emphasize that our core contributions are to develop novel techniques and results for analyzing the IHS with the SRHT, as this may be useful for future developments and extensions of this algorithm in different contexts (e.g., constrained least-squares, convex optimization).

Although we characterize the optimal step sizes and momentum parameters for the IHS with Haar embeddings, we only characterize the optimal step size in the absence of momentum for the IHS with the SRHT. It is thus left as an open question to know whether momentum can accelerate further our method.

\section{Technical Background}
\label{SectionMathematicalBackground}

We introduce a few needed definitions, and we refer the reader to~\cite{bai2009spectral, anderson2010introduction, paul2014random, yao2015large} for an extensive introduction to random matrix theory. Let $\{M_n\}_n$ be a sequence of Hermitian random matrices, where each $M_n$ has size $n \times n$. For a fixed $n$, the empirical spectral distribution (e.s.d.) of $M_n$ is the (cumulative) distribution function of its eigenvalues $\lambda_{1}, \hdots, \lambda_{n}$, i.e., $F_{M_n}(x) \defn \frac{1}{n} \sum_{j=1}^n \mathbf{1}\left\{\lambda_{j} \less x\right\}$ for $x \in \real$, which has density $f_{M_n}(x) = \frac{1}{n} \sum_{j=1}^n \delta_{\lambda_j}(x)$ with $\delta_{\lambda}$ the Dirac measure at $\lambda$. Due to the randomness of the eigenvalues, $F_{M_n}$ is random. The relevant aspect of some classes of large $n \times n$ symmetric random matrices $M_n$ is that, almost surely, the e.s.d.~$F_{M_n}$ converges weakly towards a non-random distribution $F$, as $n \to \infty$. This function $F$, if it exists, will be called the \emph{limiting spectral distribution} (l.s.d.) of $M_n$. 

A powerful tool in the analysis of random matrices is the Stieltjes transform. For $\mu$ a probability measure supported on $[0,+\infty)$, its Stieltjes transform is defined over the complex space complementary to the support of $\mu$ as
\begin{align}
\label{EqnStieltjesTransform}
    m_\mu(z) \defn \int \frac{1}{x-z} \, \mathrm{d}\mu(x)\,.
\end{align}
It holds in particular that $m_\mu$ is analytic over $\mathbb{C}\setminus \real_+$, $m_\mu(z) \in \mathbb{C}^+$ for $z \in \mathbb{C}^+$, $m_\mu(z) \in \mathbb{C}^-$ for $z \in \mathbb{C}^-$ and $\mu_\mu(z) > 0$ for $z < 0$, where $\mathbb{R}_+$ is the set of positive reals and $\mathbb{C}^+$ is the set of complex numbers with positive imaginary part. Another useful transform for studying the product of random matrices is the $S$-transform, denoted $S_\mu$. This is defined as the solution of the following equation, which is unique under certain conditions (see \cite{voiculescu1992free}),
\begin{align}
\label{EqnSTransform}
    m_\mu\!\left(\frac{z+1}{zS_\mu(z)}\right) + z S_\mu(z) = 0.
\end{align}
We introduce a few additional concepts from free probability that will be used in the proofs. We refer the reader to~\cite{voiculescu1992free,hiai2006semicircle,nica2006lectures,anderson2010introduction} for an extensive introduction to this field. Consider the algebra $\mathcal{A}_n$ of $n \times n$ random matrices. For $\!X_n\! \in \mathcal{A}_n$, we define the linear functional $\tau_n(X_n) \!\defn\! \frac{1}{n}\Exs\left[\trace X_n\right]$. Then, we say that a family $\{X_{n,1}, \dots, X_{n,I}\}$ of random matrices in $\mathcal{A}_n$ is \emph{asymptotically free} if for every $i \in \{1,\dots,I\}$, $X_{n,i}$ has a limiting spectral distribution, and if $\tau\left(\prod_{j=1}^m P_j\left(X_{n,i_j}-\tau\left(P_j(X_{n,i_j})\right)\right)\right) \rightarrow 0$ almost surely for any positive integer $m$, any polynomials $P_1, \dots, P_m$ and any indices $i_1, \hdots, i_m \in \{1,\dots,I\}$ with $i_1 \!\neq\!i_2, \dots,i_{m-1}\!\neq\!i_m\neq\!i_1$. In particular, this definition implies that for two sequences of asymptotically free random matrices $X_n,Y_n$, we have the \emph{trace decoupling} relation
\begin{align}
\label{EqnTraceDecoupling}
    \frac{1}{n}\Exs\left[\trace X_n Y_n\right]-\frac{1}{n}\Exs\left[\trace X_n\right]\frac{1}{n}\Exs\left[\trace Y_n\right] \rightarrow 0\,.
\end{align}
Essential to our analysis is the following result. If two $n \times n$ random matrices $A_n$ and $B_n$ are asymptotically free and have respective l.s.d.~$\mu_A$ and $\mu_B$ with respective $S$-transforms $S_A$ and $S_B$, then the matrix product $A_n B_n$ has l.s.d.~$\mu_{AB}$ whose $S$-transform is $S_{AB}(z) = S_A(z)S_B(z)$. The distribution $\mu_{AB}$ is called the free multiplicative convolution of $\mu_A$ and $\mu_B$, and we denote $\mu_{AB} = \mu_A \boxtimes \mu_B$.

We will also make use of an alternative form of the Stieltjes transform: the $\eta$-transform is defined for $z \in \mathbb{C} \setminus \real^-$ as
\begin{align} 
\label{EqnEtaTransform}
    \eta_\mu(z) \defn \int\frac{1}{1+zx}\,\mathrm{d}\mu(x) = \frac{1}{z} m_\mu\left(-\frac{1}{z}\right)\,.
\end{align}
There are standard examples of classes of random matrices and their limiting spectral behavior. We recall a classical result~\cite{marchenko1967distribution}. If $S$ is an $m \times d$ matrix with identically and independently distributed entries $\mathcal{N}(0,1/m)$, then, as $m,d \to \infty$ with $m/d \to \rho \in (0,1)$, the Marchenko-Pastur theorem (see \cite{marchenko1967distribution, bai2009spectral}) states that the matrix $S^\top S$ has l.s.d.~$F_\rho$, whose Stieltjes transform is the unique solution of a certain fixed point equation, and whose density is explicitly given by
\begin{align}
\label{EqnMarcenkoPastur}
    \mu_\rho(x) = \frac{\sqrt{(b-x)_+(x-a)_+}}{2\pi\rho x}\,,
\end{align}
where $y_+=\max\{0,y\}$, $a=(1-\sqrt{\rho})^2$ and $b=(1+\sqrt{\rho})^2$. In our analysis of Haar and SRHT matrices, we will encounter similar fixed-point equations satisfied by the Stieltjes (or $\eta$-) transform of their l.s.d.

\section{Sketching with Haar matrices}
\label{SectionHaar}
Sketching matrices with i.i.d.~entries are not ideal for sketching. Intuitively, i.i.d.~projections distort the geometry of Euclidean space due to their non-orthogonality. In this section, we consider the IHS with refreshed Haar matrices $\{S_t\}$. The following result says that orthogonal projection has better performance than Gaussian projection. 

\begin{theorem}[Optimal IHS with Haar sketches] 
\label{TheoremIHSHaar}
With refreshed Haar matrices $\{S_t\}$, step sizes $\mu_t = \theta_{1,h}/\theta_{2,h}$ (where $\theta_{i,h}$ are defined in Lemma~\ref{TheoremHaarlsd}) and momentum parameters $\beta_t=0$, the sequence of error vectors $\{\Delta_t\}$ satisfies
\begin{align}
\label{EqnErrorIHSHaar}
    \rho_h \defn \left(\lim_{n \to \infty} \frac{\Exs \|\Delta_t\|^2}{\|\Delta_0\|^2}\right)^{1/t} = \rho_g \cdot \frac{\xi (1-\xi)}{\gamma^2 + \xi - 2 \xi \gamma}\,.
\end{align}
Further, for any sequence of step sizes $\{\mu_t\}$ and momentum parameters $\{\beta_t\}$, we have that, for the resulting sequence of error vectors $\{\Delta_t\}$,
\begin{align}
\label{EqnOptimalRateHaar}
    \rho_h \,\less\, \liminf_{t \to \infty} \left(\lim_{n \to \infty} \frac{\Exs \|\Delta_t\|^2}{\|\Delta_0\|^2}\right)^{1/t}\,,
\end{align}
that is, $\rho_h$ is the optimal rate one may achieve using Haar embeddings.
\end{theorem}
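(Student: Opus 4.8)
The plan is to reduce the IHS iteration to an explicit linear recursion for the error $\Delta_t$ and then exploit two structural properties of refreshed Haar sketches: independence of $S_t$ from the past, and orthogonal invariance of the law of $Q_t \defn U^\top S_t^\top S_t U$. Writing the thin SVD $A = U\Sigma V^\top$ and using $\nabla f(x_t) = A^\top A (x_t - x^*)$, one gets $H_t^{-1}\nabla f(x_t) = V \Sigma^{-1} Q_t^{-1}\Sigma V^\top (x_t - x^*)$, so multiplying \eqref{EqnIHSUpdate} on the left by $\Sigma V^\top$ and recalling $\Delta_t = \Sigma V^\top(x_t - x^*)$ yields
\begin{equation*}
  \Delta_{t+1} = \left(I_d - \mu_t Q_t^{-1}\right)\Delta_t + \beta_t\left(\Delta_t - \Delta_{t-1}\right).
\end{equation*}
For $O \in O(d)$ the columns of $UO$ are still orthonormal, so there is $\Pi \in O(n)$ with $\Pi U = UO$; since the projection $S_t^\top S_t$ onto a uniformly random $m$-dimensional subspace satisfies $\Pi^\top S_t^\top S_t \Pi \overset{d}{=} S_t^\top S_t$, we get $O^\top Q_t O \overset{d}{=} Q_t$. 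Hence $\Exs[Q_t^{-1}] = c_{1,n}I_d$ and $\Exs[Q_t^{-2}] = c_{2,n}I_d$ with $c_{j,n} \defn \tfrac1d\trace\Exs[Q_t^{-j}]$, and Lemma~\ref{TheoremHaarlsd} gives $c_{1,n}\to\theta_{1,h}$, $c_{2,n}\to\theta_{2,h}$.

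\textbf{The rate with $\beta_t=0$.} Conditioning on $\mathcal{F}_{t-1} \defn \sigma(S_0,\dots,S_{t-1})$ and using that $\Delta_t$ is $\mathcal{F}_{t-1}$-measurable while $S_t$ is independent of $\mathcal{F}_{t-1}$,
\begin{equation*}
  \Exs\|\Delta_{t+1}\|^2 = \Exs\!\left[\Delta_t^\top\!\left(I_d - 2\mu_t\Exs[Q_t^{-1}] + \mu_t^2\Exs[Q_t^{-2}]\right)\!\Delta_t\right] = \left(1 - 2\mu_t c_{1,n} + \mu_t^2 c_{2,n}\right)\Exs\|\Delta_t\|^2,
\end{equation*}
which is an exact identity at finite $n$ (the prescribed $\mu_t = \theta_{1,h}/\theta_{2,h}$ does not depend on $n$). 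Iterating over $t$ steps and letting $n\to\infty$ gives $\lim_n \Exs\|\Delta_t\|^2/\|\Delta_0\|^2 = \prod_{s=0}^{t-1}(1 - 2\mu_s\theta_{1,h} + \mu_s^2\theta_{2,h})$; since $\mu\mapsto 1 - 2\mu\theta_{1,h} + \mu^2\theta_{2,h}$ is minimized at $\mu = \theta_{1,h}/\theta_{2,h}$ with value $1 - \theta_{1,h}^2/\theta_{2,h}$, this choice yields rate $\rho_h = 1 - \theta_{1,h}^2/\theta_{2,h}$. Substituting the closed forms of $\theta_{1,h}$ and $\theta_{2,h}$ from Lemma~\ref{TheoremHaarlsd} and simplifying (the numerator of $1-\theta_{1,h}^2/\theta_{2,h}$ collapses to $\gamma(1-\xi)$) gives $\rho_h = \rho_g\cdot\frac{\xi(1-\xi)}{\gamma^2+\xi-2\xi\gamma}$, and the inequality $\rho_h < \rho_g$ is equivalent to $\gamma^2 + \xi - 2\xi\gamma - \xi(1-\xi) = (\gamma - \xi)^2 > 0$.

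\textbf{Optimality over $(\mu_t,\beta_t)$.} The recursion for $\Delta_t$ above has exactly the same form as the Gaussian IHS recursion analyzed in \cite{lacotte2019faster}, and $Q_t$ enjoys the same orthogonal invariance there used for $U^\top S^\top S U$ with a Gaussian $S$; the lower-bound argument of \cite[Theorem~2]{lacotte2019faster} uses only these facts and the first two inverse moments, so it applies verbatim with $(\theta_{1,g},\theta_{2,g})$ replaced by $(\theta_{1,h},\theta_{2,h})$. Concretely, conditioning on $\mathcal{F}_{t-1}$ and invoking orthogonal invariance again, the dynamics of the triple $\bigl(\Exs\|\Delta_{t+1}\|^2,\ \Exs\langle\Delta_{t+1},\Delta_t\rangle,\ \Exs\|\Delta_t\|^2\bigr)$ is governed by a $3\times 3$ linear transfer operator whose entries depend only on $\mu_t,\beta_t,\theta_{1,h},\theta_{2,h}$; a spectral-radius argument then shows that $\liminf_t (\lim_n \Exs\|\Delta_t\|^2/\|\Delta_0\|^2)^{1/t}$ is at least the smallest attainable spectral radius over constant $(\mu,\beta)$, which is reached at $\beta = 0$, $\mu = \theta_{1,h}/\theta_{2,h}$ and equals $\rho_h$. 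This gives \eqref{EqnOptimalRateHaar} and simultaneously shows momentum cannot accelerate the method.

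\textbf{Main obstacle.} The substantive input is Lemma~\ref{TheoremHaarlsd} — in particular the new second-moment formula for $\theta_{2,h}$ — which is proved elsewhere; within the theorem itself the only delicate points are (i) the passage $c_{j,n}\to\theta_{j,h}$, which needs uniform integrability of $x^{-1}$ and $x^{-2}$ against the e.s.d.\ of $Q_t$ near the origin, available because $\xi>\gamma$ keeps $\lambda_{\min}(Q_t)$ bounded away from $0$, and (ii) verifying that the momentum lower bound of \cite{lacotte2019faster} depends on the sketch only through orthogonal invariance and $\theta_{1,h},\theta_{2,h}$, so that it transfers with no change beyond substituting constants.
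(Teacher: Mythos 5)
Your proposal is correct and follows essentially the same route as the paper: reduce the update to the error recursion $\Delta_{t+1} = (I_d - \mu_t (U^\top S_t^\top S_t U)^{-1})\Delta_t$, use rotational invariance of the Haar sketch to replace $\Exs[(U^\top S_t^\top S_t U)^{-p}]$ by $\theta_{p}$ times the identity, minimize the resulting quadratic in $\mu_t$ to get $\rho_h = 1-\theta_{1,h}^2/\theta_{2,h}$ via Lemma~\ref{TheoremHaarlsd}, and defer the optimality over $(\mu_t,\beta_t)$ to Theorem~2 of \cite{lacotte2019faster}. Your conjugation-invariance argument for $\Exs[Q_t^{-p}] \propto I_d$ is an equivalent variant of the paper's sign/permutation symmetry of the eigenvector matrix, and your algebraic simplifications match the paper's.
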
 

The proof of Theorem~\ref{TheoremIHSHaar}, whose details are deferred to Appendix~\ref{ProofTheoremIHSHaar}, is decomposed into two steps. First, we relate the asymptotic convergence rate $\rho_h$ to the first and second moments of the inverse l.s.d.~of the sketched matrix $SU$, and we adapt to the asymptotic setting the proof of Theorem~1 in~\cite{lacotte2019faster}. Then, and this is our key technical contribution, we provide an explicit formula of this second moment, as given in the following technical lemma.

\begin{lemma}[First two inverse moments of Haar sketches] 
\label{TheoremHaarlsd}
Suppose that $S$ is an $m\times n$ Haar matrix, and let $U$ be an $n\times d$ deterministic matrix with orthonormal columns. It holds that
\begin{align}
\label{EqnTraceExpressions}
    \theta_{1,h} \defn & \lim_{n \to \infty} \frac{1}{d} \, \trace\,\Exs\left[ (U^\top S^\top S U)^{-1}\right] \, = \, \frac{1-\gamma}{\xi - \gamma} \nonumber\\
    \theta_{2,h} \defn & \lim_{n \to \infty} \frac{1}{d} \, \trace\,\Exs\left[ (U^\top S^\top S U)^{-2}\right] \, = \, \frac{(1-\gamma)(\gamma^2 + \xi - 2\gamma \xi)}{(\xi - \gamma)^3}\,.
\end{align}
\end{lemma}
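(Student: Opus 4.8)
The plan is to identify the limiting spectral distribution (l.s.d.) $\nu$ of $U^\top S^\top S U$ by a short free-probability computation, extract from it an algebraic equation for the Stieltjes transform $m_\nu$, read $\theta_{1,h}$ and $\theta_{2,h}$ off the behaviour of $m_\nu$ near the origin, and finally justify interchanging the expectation and the limit in the statement.

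\emph{Reduction to free projections.} Since $S$ is an $m\times n$ Haar matrix we have $SS^\top=I_m$, so $P\defn S^\top S$ is the orthogonal projection onto the uniformly random $m$-dimensional row space of $S$; its empirical spectral distribution converges weakly to $\mu_P\defn(1-\xi)\,\delta_0+\xi\,\delta_1$. Let $Q\defn UU^\top$, a deterministic rank-$d$ projection whose empirical spectral distribution converges to $\mu_Q\defn(1-\gamma)\,\delta_0+\gamma\,\delta_1$. Writing $P=O\Lambda O^\top$ with $O$ Haar orthogonal and $\Lambda$ a fixed coordinate projection shows that $P$ is rotationally invariant; in particular the distribution of $U^\top S^\top S U$ does not depend on which $U$ with orthonormal columns one takes (so one may assume $U$ consists of the first $d$ columns of $I_n$, making $U^\top S^\top S U$ the top-left $d\times d$ block of $P$), and $P$ is asymptotically free from $Q$. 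Because $QU=U$, the $d$ eigenvalues of $U^\top S^\top S U=U^\top P U$ are exactly the generically nonzero eigenvalues of the $n\times n$ positive semidefinite matrix $QPQ$, which has l.s.d.\ $\mu_P\boxtimes\mu_Q$ (the free multiplicative convolution, computable via $S$-transforms as recalled above). Hence $\mu_P\boxtimes\mu_Q=(1-\gamma)\,\delta_0+\gamma\,\nu$, where $\nu$ is the l.s.d.\ of $U^\top S^\top S U$; since $\gamma<\xi<1$ strictly, $\nu$ is compactly supported with left edge $a_->0$, so $\int x^{-1}\mathrm d\nu$ and $\int x^{-2}\mathrm d\nu$ are finite.

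\emph{Computing $m_\nu$ and the two moments.} The $S$-transform of a Bernoulli law $(1-c)\,\delta_0+c\,\delta_1$ is $\tfrac{1+z}{c+z}$ (a one-line check against the defining equation~\eqref{EqnSTransform}), so $S_{\mu_P\boxtimes\mu_Q}(z)=S_{\mu_P}(z)\,S_{\mu_Q}(z)=\tfrac{(1+z)^2}{(\xi+z)(\gamma+z)}$. Unwinding the definition of the $S$-transform — using $m_{\mu_P\boxtimes\mu_Q}(z)=-\tfrac{1-\gamma}{z}+\gamma\,m_\nu(z)$ to strip off the atom at $0$ — turns this into a quadratic fixed-point equation for $m_\nu$ of the form
\begin{align*}
(1-\gamma)-(\xi-\gamma)\,m_\nu(z)+z\,m_\nu(z)\,(\gamma\,m_\nu(z)+1-2\gamma)-\gamma\,z^2\,m_\nu(z)^2=0\,.
\end{align*}
(This equation, and the value of $\theta_{1,h}$, are essentially available from the l.s.d.\ results of \cite{dobriban2019asymptotics}; the second moment $\theta_{2,h}$ is the new ingredient.) Now $\theta_{1,h}=\int x^{-1}\mathrm d\nu=m_\nu(0)$ and $\theta_{2,h}=\int x^{-2}\mathrm d\nu=m_\nu'(0)$. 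Setting $z=0$ gives $(1-\gamma)-(\xi-\gamma)\,m_\nu(0)=0$, i.e.\ $\theta_{1,h}=\tfrac{1-\gamma}{\xi-\gamma}$; differentiating the equation once and evaluating at $z=0$ gives $-(\xi-\gamma)\,m_\nu'(0)+m_\nu(0)(\gamma\,m_\nu(0)+1-2\gamma)=0$, so $\theta_{2,h}=\tfrac{m_\nu(0)(\gamma\,m_\nu(0)+1-2\gamma)}{\xi-\gamma}$, and substituting $m_\nu(0)=\tfrac{1-\gamma}{\xi-\gamma}$ together with the identity $\gamma(1-\gamma)+(1-2\gamma)(\xi-\gamma)=\gamma^2+\xi-2\gamma\xi$ yields $\theta_{2,h}=\tfrac{(1-\gamma)(\gamma^2+\xi-2\gamma\xi)}{(\xi-\gamma)^3}$.

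\emph{Main obstacle.} The $S$-transform algebra above is short; the delicate point is that $x\mapsto x^{-1}$ and $x\mapsto x^{-2}$ are unbounded near $0$, so almost-sure weak convergence of the empirical spectral distribution of $U^\top S^\top S U$ to $\nu$ does not immediately give $\tfrac1d\,\Exs\,\trace\big[(U^\top S^\top S U)^{-k}\big]\to\int x^{-k}\mathrm d\nu$ for $k=1,2$. To close this gap one needs (i) a no-outlier bound ensuring $\lambda_{\min}(U^\top S^\top S U)\gre a_--o(1)>0$ with overwhelming probability, and (ii) a uniform-integrability estimate such as $\Exs\big[\lambda_{\min}^{-4}\,\mathbf 1\{\lambda_{\min}<\varepsilon_0\}\big]\to0$ for some fixed $\varepsilon_0\in(0,a_-)$; both follow from concentration bounds on the largest principal angle between a fixed $d$-dimensional subspace and a uniformly random $m$-dimensional one (equivalently, lower-tail estimates for the smallest eigenvalue of a matrix-variate beta matrix), and this is precisely where the strict gap $\xi>\gamma$ is used. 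I would therefore devote most of the effort to this eigenvalue control; the identification of $\nu$ and the evaluation of the two inverse moments are then immediate from the fixed-point equation.
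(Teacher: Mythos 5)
Your proposal is correct and reaches the right formulas, but by a genuinely different route from the paper's own proof of Lemma~\ref{TheoremHaarlsd}. You identify the l.s.d.\ of $U^\top S^\top S U$ through the free multiplicative convolution of the two Bernoulli laws $(1-\xi)\delta_0+\xi\delta_1$ and $(1-\gamma)\delta_0+\gamma\delta_1$, multiply $S$-transforms, and then read off $\theta_{1,h}=m_\nu(0)$ and $\theta_{2,h}=m_\nu'(0)$ by implicit differentiation of the resulting quadratic; I verified that your quadratic fixed-point equation is exactly equivalent to the paper's closed form~\eqref{EqnStieltjesTransformHaar} (equivalently, to the fixed point~\eqref{EqnetaCfixedpoint} after stripping the atom at $0$), and both evaluations at $z=0$ give the stated values. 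The paper instead embeds $S,U$ into full orthogonal matrices, applies Theorem~4.11 of \cite{couillet2011random} to the auxiliary matrix $C_n$, obtains an explicit $m_C$, and computes $\theta_{2,h}$ by an explicit derivative/L'Hospital calculation with the $\triangle_i''$ bookkeeping; your $S$-transform argument is essentially the one the paper itself uses later for the SRHT case (Appendix~\ref{sec: moment SRHT}), so you obtain the Haar result by the ``SRHT method,'' which is shorter and avoids the messy differentiation --- implicit differentiation of the algebraic equation is a cleaner way to get the second inverse moment. Two points your route must still supply: (a) asymptotic freeness of the rotationally invariant projection $S^\top S$ from the deterministic projection $UU^\top$ in the orthogonal (not just unitary) setting, which is standard but should be cited; and (b) the interchange of expectation and limit for the unbounded functions $x^{-1},x^{-2}$, together with the claim that the left edge of the support is strictly positive. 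You correctly flag (b) and reduce it to lower-tail bounds for the smallest eigenvalue of a Jacobi/beta-type matrix, which is a workable plan; the paper handles the analogous issue via Lemma~\ref{lem: support of Fh} (an almost-sure asymptotic lower bound on $\sigma_{\min}(SU)$ obtained by a Gaussian comparison), and its own treatment of the expectation step is no more detailed than yours, so this is a shared loose end rather than a defect of your approach --- but to finish you should either carry out the tail estimate or import the paper's comparison argument to make the left-edge claim $a_->0$ quantitative rather than asserted.
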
 
The formula of the second moment, to the best of our knowledge, is derived explicitly for the first time. We provide a proof sketch here. Note that $\theta_{i,h}$ ($i=1,2$) is the average of the eigenvalues of $U^\top S^\top SU$ to the power of $-i$. Denoting $F_h$ the limiting distribution of the eigenvalues of $U^\top S^\top SU$, we have $\theta_{i,h}=\int x^{-i}dF_h(x)$. This matrix has a specific structure whose l.s.d. has been studied in the random matrix literature. Specifically, given some diagonal non-negative matrices $D,T$ and a squared Haar matrix $W$, Theorem 4.11 of \cite{couillet2011random} characterizes the l.s.d. of matrices of the form $D^{\frac{1}{2}}WTW^\top D^{\frac{1}{2}}$ through a system of functions involving its $\eta$-transform and the l.s.d. of $D,T$. Our setting is more intricate, as $S,U$ are both partial orthogonal matrices, and we need to use an orthogonal complement trick. After getting the $\eta$-transform and thus the Stieltjes transform $m(z)=\int\frac{1}{x-z}dF_h(x)$, we can calculate $\theta_{1,h},\theta_{2,h}$ by evaluating the first and second derivatives of $m(z)$ at $0$. Fortunately in our case, the Stieltjes transform has a closed form, though the calculation is cumbersome. We defer the detailed proof to Appendix~\ref{ProofTraceCalculationsHaar}.

One might wonder how the l.s.d.~of Haar matrices and that of Gaussian embeddings -- the Marchenko-Pastur law $\mu_{\rho_g}$ -- differ. Consider the re-scaled matrix $\frac{n}{m} S_{1,n}^\top S_{1,n}$, whose expectation is equal to the identity. Crucially, the l.s.d.~$\mu_{\rho_g}$ does not depend on the sample size $n$ but only on the limit ratio between $d$ and $m$, whereas the distribution $F_h$ involves the ratios $\gamma$ and $\xi$. Numerically, we observe in Figure~\ref{FigHaarEmpiricalDensity} that, for fixed $\gamma\!=\!0.2$, as $\xi$ increases, the empirical Haar density departs from the Marchenko-Pastur density $\mu_{\rho_g}$, and concentrates more and more relatively to $\mu_{\rho_g}$. Importantly, we see that the support of $F_h$ is included within the support of $\mu_{\rho_g}$, and thus, more concentrated around $1$.
\begin{figure}[h!]
	\centering
	\includegraphics[width=13cm]{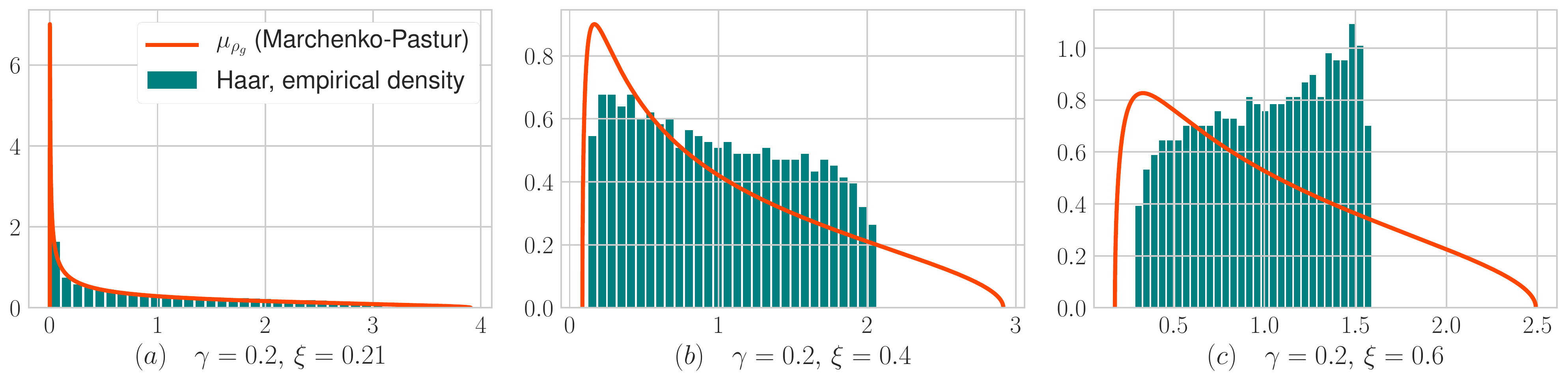}
	\caption{Empirical density of the matrix $\frac{n}{m} U^\top S^\top S U$ for $S$ an $m \!\times\! n$ Haar matrix, versus Marchenko-Pastur density with shape parameter $d/m$. We use $n=4096$, $d=820$ and $m \in \{860, 1640, 2450\}$, so that $\gamma \approx 0.2$ and $\xi \in \{0.2, 0.4, 0.6\}$.}
	\label{FigHaarEmpiricalDensity}
\end{figure}
According to Theorem \ref{TheoremIHSHaar} orthogonal projections are uniformly better than Gaussian i.i.d.~projections. Indeed, the ratio between the convergence rates $\rho_h$ and $\rho_g$ is equal to $\xi (1-\xi) / (\gamma^2 + \xi - 2\gamma \xi)$, and is \emph{always strictly smaller} than $1$. To see this, note that $\xi (1-\xi) / (\gamma^2 + \xi - 2\gamma \xi) < 1$ if and only if $\xi (1-\xi) < \gamma^2 + \xi - 2\gamma \xi$, and after simplification, we obtain the condition $(\xi - \gamma)^2 > 0$. In the small sketch size regime $d \less m \ll n$, we have $\rho_h / \rho_g \approx 1$. As the sketch size $m$ increases relatively to $n$, the convergence rates' ratio scales as $\rho_h / \rho_g \approx (1-\xi)$, and one can improve on the number of iterations -- and thus, data passes -- with Haar embeddings by making $1-\xi$ bounded away from $1$. Further, observe that if we do not reduce the size of the original matrix, so that $m=n$ and $\xi=1$, then the algorithm converges in one iteration. This means that we do not lose any information in the linear model. In contrast, Gaussian projections introduce more distortions than rotation, even though the rows of a Gaussian matrix are almost orthogonal to each other in the high-dimensional setting. The reason is that the eigenvalues are not close to unity.

Interestingly, momentum does not accelerate the refreshed sketch with Haar embeddings. Leveraging past information through the Heavy-ball update~\eqref{EqnIHSUpdate} does not provide any benefit, possibly due to the independence between the sketching matrices $\{S_t\}$. Our proof of this fact is actually an immediate consequence of Theorem~2 in~\cite{lacotte2019faster}. On the other hand, it remains an open question whether there exists a first-order method which uses past iterates along with refreshed matrices, and provide acceleration over gradient descent updates.

We also emphasize that \emph{the optimal parameters have asymptotically closed forms, for any data matrix $A$}! This is quite unexpected and can be useful in practice. The reason is that random projections introduce a great deal of regularity, leading to a "universal" behavior of certain quantities, including those we need. For methods such as gradient descent with momentum, the optimal parameters (e.g, stepsize, momentum), can depend on quantities that can be nontrivial to estimate (e.g, the Lipschitz constant), and require extra computational work.

However, the time complexity of generating an $m \times n$ Haar matrix using the Gram-Schmidt procedure is $O(n m^2)$, which is, for instance, larger than the classical cost $\mathcal{O}(nd^2)$ for solving the least-squares problem~\eqref{EqnMain}, and we now turn to the analysis of another orthogonal matrix, the SRHT, which contains less randomness, but is more structured and faster to generate.

\section{Sketching with SRHT matrices}
\label{SectionSRHT}

We have seen in the previous section that Haar random projections have a better performance than Gaussian i.i.d.~random projections. However, they are still slow to generate and apply. Can we get the same good statistical performance as Haar projections with faster methods? Here we consider the SRHT. This is faster as it relies on the well-structured Walsh-Hadamard transform, which is defined as follows. For an integer $n=2^p$ with $p \gre 1$, the Walsh-Hadamard transform is defined recursively as $H_n = \begin{bmatrix} H_{n/2} & H_{n/2} \\ H_{n/2} & -H_{n/2} \end{bmatrix}$ with $H_1 = 1$. We consider a version of the SRHT which is slightly different from the classical SRHT~\cite{ailon2006approximate}. Our transform $A \mapsto S A$ first \emph{randomly permutes} the rows of $A$, before applying the classical transform. This has negligible cost $\mathcal{O}(n)$ compared to the cost $\mathcal{O}(nd \log m)$ of the matrix multiplication $A \mapsto SA$, and \emph{breaks the non-uniformity} in the data. That is, we define the $n\times n$ subsampled randomized Hadamard matrix as $S = B H_n D P/\sqrt n$, where $B$ is an $n \times n$ diagonal sampling matrix of i.i.d.~Bernoulli random variables with success probability $m/n$, $H_n$ is the $n \times n$ Walsh-Hadamard matrix, $D$ is an $n \times n$ diagonal matrix of i.i.d.~sign random variables, equal to $\pm 1$ with equal probability, and $P\in\real^{n\times n}$ is a uniformly distributed permutation matrix. At the last step, we discard the zero rows of $S$, so that it becomes an $\widetilde m \times n$ orthogonal matrix with $\widetilde m \sim \mathrm{Binomial}(m/n,n)$, and the ratio $\widetilde m / n$ concentrates fast around $\xi$ as $n \to \infty$. Although the dimension $\widetilde m$ is random, we refer to $S$ as an $m \times n$ SRHT matrix.

The following theorem characterizes the exact convergence rate of the IHS with refreshed SRHT embeddings.
\begin{theorem}[IHS with SRHT sketches]
\label{TheoremIHSSRHT}
Suppose that the initial point $x_0$ is random and that the error vector $\Delta_0$ satisfies the condition $\Exs\left[\Delta_0 \Delta_0^\top \right] \!=\! d^{-1} I_d$. Then, with refreshed SRHT matrices $\{S_t\}$, step sizes $\mu_t = \theta_1^h/\theta_2^h$ and momentum parameters $\beta_t = 0$, the sequence of error vectors $\{\Delta_t\}$ satisfies
\begin{align}
    \rho_s \defn \left(\lim_{n \to \infty} \frac{\Exs \|\Delta_t\|^2}{\Exs \|\Delta_0\|^2}\right)^{1/t} = \rho_g \cdot \frac{\xi (1-\xi)}{\gamma^2 + \xi - 2 \xi \gamma} = \rho_h\,. 
\end{align}
\end{theorem}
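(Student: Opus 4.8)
The plan is to reduce the SRHT statement to the Haar case by showing that the SRHT matrix $S = BH_nDP/\sqrt{n}$ behaves, for the purposes of the quantities governing the convergence rate, exactly like a Haar matrix in the asymptotic regime. The key point is that Theorem~\ref{TheoremIHSHaar} identifies $\rho_h$ through only two scalar quantities associated with the sketched matrix: the first two inverse moments $\theta_{1,h},\theta_{2,h}$ of the limiting spectral distribution of $U^\top S^\top S U$. So it suffices to establish that the SRHT matrix produces the same limiting spectral distribution $F_h$ for $U^\top S^\top S U$, or at least the same first two inverse moments, \emph{and} that the per-step error-decomposition argument from \cite{lacotte2019faster} still applies.

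First I would recall, following the discussion after Lemma~\ref{TheoremHaarlsd} and the results of \cite{anderson2014asymptotically,dobriban2019asymptotics}, that the rows of the SRHT form an asymptotically liberating sequence: the randomized Hadamard transform $H_n D P$, with the sign and permutation randomization, is asymptotically free from any deterministic matrix, and the Bernoulli sampling $B$ plays the role of the free ``projection'' that a truncated Haar matrix plays. Concretely, one shows that $U^\top S^\top S U$ and $U^\top W^\top W U$ (with $W$ an $m\times n$ Haar matrix) have the same limiting spectral distribution, because in both cases the l.s.d. is obtained as a free multiplicative convolution of the same building blocks (the law of a Bernoulli projection and the deterministic spectrum of $U^\top U = I_d$ embedded in $\real^n$), via the $S$-transform identity $S_{AB}=S_AS_B$ quoted in Section~\ref{SectionMathematicalBackground}. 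This gives $\theta_{1,s}=\theta_{1,h}$ and $\theta_{2,s}=\theta_{2,h}$, hence the same optimal step size $\mu_t=\theta_{1,h}/\theta_{2,h}$ and the same candidate rate.

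Second I would redo the error-recursion bookkeeping. Writing $\Delta_{t+1} = (I - \mu_t (U^\top S_t^\top S_t U)^{-1}) \Delta_t$ (with $\beta_t = 0$), one needs $\Exs\|\Delta_{t+1}\|^2 = \Exs[\Delta_t^\top \Exs_{S_t}[(I-\mu (U^\top S^\top S U)^{-1})^2]\,\Delta_t]$, and the inner conditional expectation must concentrate onto a scalar multiple of $I_d$ so that it decouples from $\Delta_t$. For Haar this holds by rotational invariance of the distribution of $U^\top S^\top S U$; for the SRHT there is no exact rotational invariance, which is precisely why the extra hypothesis $\Exs[\Delta_0\Delta_0^\top] = d^{-1} I_d$ is imposed. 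I would propagate this isotropy: conditioned on the history, $\Exs[\Delta_t\Delta_t^\top]$ stays proportional to $I_d$ because the refreshed $S_t$ is independent of $\Delta_t$ and, although $(I-\mu(U^\top S_t^\top S_t U)^{-1})^2$ is not a scalar matrix, its trace is what enters: $\Exs\|\Delta_{t+1}\|^2 = \tfrac{1}{d}\Exs\|\Delta_t\|^2 \cdot \Exs\,\trace[(I-\mu(U^\top S^\top S U)^{-1})^2] + o(1)$, and expanding the square gives $1 - 2\mu\theta_{1,s} + \mu^2\theta_{2,s}$, which at $\mu=\theta_{1,s}/\theta_{2,s}$ equals $1-\theta_{1,s}^2/\theta_{2,s} = \rho_h$ after plugging in the Lemma~\ref{TheoremHaarlsd} formulas. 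Iterating yields $\Exs\|\Delta_t\|^2/\Exs\|\Delta_0\|^2 \to \rho_h^t$.

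The main obstacle is the asymptotic-freeness / liberating-sequence step together with the attendant need to control the \emph{second} inverse moment uniformly: weak convergence of the e.s.d. alone does not control $\int x^{-2}\,dF$, since $x^{-2}$ is unbounded near $0$, so one must separately argue that the smallest eigenvalue of $U^\top S^\top S U$ stays bounded away from $0$ with high probability (a smallest-singular-value / subspace-embedding bound for the SRHT, available from \cite{anderson2014asymptotically} or standard SRHT concentration), to justify passing moments to the limit and to handle the expectation $\Exs[(U^\top S^\top S U)^{-2}]$ rather than just the almost-sure spectral limit. A secondary subtlety is making the ``trace decoupling'' \eqref{EqnTraceDecoupling} rigorous in the presence of the inverse (a non-polynomial function), which I would handle by truncating the spectrum below a small constant and approximating $x^{-1},x^{-2}$ by polynomials on the truncated range.
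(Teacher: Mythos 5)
Your first step (identifying the SRHT's limiting spectral distribution, and hence $\theta_{1},\theta_{2}$, with the Haar ones via the asymptotically liberating sequences of \cite{anderson2014asymptotically} and the $S$-transform/free multiplicative convolution) is exactly the paper's route to Lemma~\ref{TheoremSRHTlsd}. The gap is in your second step, the error recursion. You claim that the isotropy $\Exs[\Delta_t\Delta_t^\top]\propto I_d$ "stays" along the iterations because $S_t$ is independent of $\Delta_t$. It does not: with $Q_t=I_d-\mu_t(U^\top S_t^\top S_tU)^{-1}$ one has $\Exs[\Delta_{t+1}\Delta_{t+1}^\top]=\Exs[Q_t\,\Exs[\Delta_t\Delta_t^\top]\,Q_t]$, and for the SRHT the matrix $\Exs[Q_t^2]$ is \emph{not} a scalar multiple of $I_d$ — there is no rotational invariance, which is precisely why the hypothesis $\Exs[\Delta_0\Delta_0^\top]=d^{-1}I_d$ is needed at all. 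So after one step the isotropy is lost, and your key identity $\Exs\|\Delta_{t+1}\|^2=\tfrac1d\Exs\|\Delta_t\|^2\cdot\Exs\,\trace[Q_t^2]+o(1)$ is asserted without a valid justification: independence alone gives $\Exs\|\Delta_t\|^2=\tfrac1d\trace\Exs\!\left[Q_1\cdots Q_{t-1}Q_{t-1}\cdots Q_1 Q_0^2\right]$ (the initialization assumption is used once, at $t=0$), but factoring this normalized trace of a \emph{product} of deterministic-in-expectation, non-scalar matrices into a product of normalized traces is exactly the nontrivial step.

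The paper closes this by invoking asymptotic freeness among the independent SRHT-based matrices $Q_j$ (again a consequence of the liberating-sequence results, cf.\ Lemma~\ref{LemmaFreenessHadamardMatrix}) together with the trace decoupling relation~\eqref{EqnTraceDecoupling}: cyclically rearrange the trace, peel off $Q_0^2$, then $Q_1^2$, etc., each peeling contributing a factor $\lim_n\tfrac1d\trace\Exs[Q_j^2]=1-2\mu_j\theta_{1,h}+\mu_j^2\theta_{2,h}$, which at $\mu_j=\theta_{1,h}/\theta_{2,h}$ gives $(1-\theta_{1,h}^2/\theta_{2,h})^t=\rho_h^t$. Your proposal needs this freeness-based decoupling among the $Q_j$'s, not merely independence; and note that one cannot instead try to absorb a general $\Delta_0$ by freeness with the rank-one matrix $\Delta_0\Delta_0^\top$, since that would require $\|\Delta_0\|/\sqrt{\Exs\|\Delta_0\|^2/d}$ to stay bounded, which fails already for deterministic $\Delta_0$ (see Remark~\ref{RemarkOptimalitySRHT}). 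Your final concern about passing from weak convergence to inverse moments (smallest eigenvalue bounded away from zero, truncation of $x^{-1},x^{-2}$) is a legitimate technical point, handled in the paper for the Haar case by the support bound of Lemma~\ref{lem: support of Fh} and transferred to the SRHT through the equality of limiting distributions, so it is compatible with the paper's treatment rather than a divergence from it.
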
 

Here we impose an additional mild assumption on the initialization of the least-squares problem~\eqref{EqnMain}. We note that the initialization condition $\Exs\left[\Delta_0 \Delta_0^\top \right] \!=\! d^{-1} I_d$ can be achieved by picking $x_0$ uniformly on the unit $d$-sphere $\mathbb{S}^{d-1}$, followed by a uniformly random signed permutation and scaling to the columns of $A$. The key challenge to avoid this is that we need to evaluate
$\Exs \left[\|\Delta_t\|^2\right]= \trace \Exs \left[Q_0\dots Q_{t-1} Q_{t-1} \dots Q_0 \Delta_0 \Delta_0^\top \right]$, where  $Q_t = I_d - \mu_t\,(U^\top S_t^\top S_t U)^{-1}$ and $U$ are the left singular vectors of $A$. Understanding this for general $\Delta_0$ requires properties that are not currently known in random matrix theory (see Appendix~\ref{ProofTheoremIHSSRHT} and Remark~\ref{RemarkOptimalitySRHT} for more details). Further we can only analyze the case $\beta_t=0$, and we do not have a proof for optimality, but \emph{we conjecture that it is true} based on numerical simulations.

We also present an upper-bound on the error, which holds for any deterministic or random initialization $x_0$ and exhibits an identical convergence rate. This is weaker by a factor of $d$, but this is negligible for large $t$.
\begin{theorem}
\label{TheoremIHSSRHTb}
For any initialization $x_0$, with refreshed SRHT matrices $\{S_t\}$, step sizes $\mu_t = \theta_1^h/\theta_2^h$ and momentum parameters $\beta_t = 0$, the sequence of error vectors $\{\Delta_t\}$ satisfies
\begin{align}
   \lim\sup_{n \to \infty} \left(\frac{\Exs \|\Delta_t\|^2}{d\cdot\Exs \|\Delta_0\|^2}\right)^{1/t}  \le  \rho_h\,. 
\end{align}
\end{theorem}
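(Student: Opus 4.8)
Theorem~\ref{TheoremIHSSRHTb} is essentially a corollary of (the proof of) Theorem~\ref{TheoremIHSSRHT}: the latter already pins down the asymptotics of $\tfrac1d\,\trace\,\Exs[R_t^\top R_t]$, and one only needs to pass from that trace to a general initialization, which is where the harmless factor $d$ enters. I would proceed as follows.

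First I would recall the standard linearization of the IHS used for Theorems~\ref{TheoremIHSHaar} and~\ref{TheoremIHSSRHT}: with $\beta_t = 0$, writing $A = U\Sigma V^\top$ and using $\nabla f(x_t) = A^\top A(x_t-x^*)$, the update~\eqref{EqnIHSUpdate} becomes $\Delta_{t+1} = Q_t\Delta_t$ with $Q_t \defn I_d - \mu_t(U^\top S_t^\top S_tU)^{-1}$, so that $\Delta_t = R_t\Delta_0$ where $R_t \defn Q_{t-1}Q_{t-2}\cdots Q_0$. Each $Q_t$ is symmetric, so $\|\Delta_t\|^2 = \Delta_0^\top R_t^\top R_t\Delta_0 = \trace\big(R_t^\top R_t\,\Delta_0\Delta_0^\top\big)$. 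The sketches $S_0,\dots,S_{t-1}$ are drawn during the run of the algorithm, hence are independent of $\Delta_0$; conditioning on $\Delta_0$, using $\Exs[R_t^\top R_t]\succeq 0$ together with $v^\top Mv = \trace(Mvv^\top)\le\lambda_{\max}(M)\,\|v\|^2\le\trace(M)\,\|v\|^2$ valid for any PSD matrix $M$, I get
\[
\Exs\|\Delta_t\|^2 = \Exs_{\Delta_0}\!\big[\trace\big(\Exs[R_t^\top R_t]\,\Delta_0\Delta_0^\top\big)\big] \;\le\; \trace\big(\Exs[R_t^\top R_t]\big)\cdot \Exs\|\Delta_0\|^2 .
\]
Dividing by $d\,\Exs\|\Delta_0\|^2$, it remains to show $\tfrac1d\,\trace\,\Exs[R_t^\top R_t]\to\rho_h^t$ as $n\to\infty$.

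Now $\tfrac1d\,\trace\,\Exs[R_t^\top R_t] = \tfrac1d\,\Exs\,\trace(Q_0Q_1\cdots Q_{t-1}Q_{t-1}\cdots Q_1Q_0)$ is exactly the quantity whose limit is established inside the proof of Theorem~\ref{TheoremIHSSRHT} (there the initialization $\Exs[\Delta_0\Delta_0^\top] = d^{-1}I_d$ makes $\Exs\|\Delta_t\|^2/\Exs\|\Delta_0\|^2$ identically equal to it, replacing the quadratic form by an average of eigenvalues rather than the bound above), so I would invoke that result directly. For completeness the mechanism is: by cyclicity of the trace $\trace(R_t^\top R_t) = \trace(Q_0^2\,Y)$ with $Y \defn Q_1\cdots Q_{t-1}Q_{t-1}\cdots Q_1$; the family $\{Q_i\}$ is asymptotically free --- the independent SRHT Gram matrices $U^\top S_i^\top S_iU$ form an asymptotically liberating sequence~\cite{anderson2014asymptotically}, and $Q_i$ is a fixed function of $U^\top S_i^\top S_iU$ uniformly approximable by polynomials on the spectral bulk --- so $Q_0^2$ and $Y$ are asymptotically free and the trace-decoupling relation~\eqref{EqnTraceDecoupling} gives $\tfrac1d\Exs\trace(Q_0^2Y) - \tfrac1d\Exs\trace(Q_0^2)\cdot\tfrac1d\Exs\trace(Y)\to 0$. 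Since by Lemma~\ref{TheoremSRHTlsd} the first two inverse moments of $U^\top S^\top SU$ for an SRHT sketch coincide with the Haar values $\theta_{1,h},\theta_{2,h}$ of~\eqref{EqnTraceExpressions}, one has $\tfrac1d\Exs\trace(Q_0^2) = 1 - 2\mu\,\tfrac1d\Exs\trace(U^\top S^\top SU)^{-1} + \mu^2\,\tfrac1d\Exs\trace(U^\top S^\top SU)^{-2} \to 1 - 2\mu\theta_{1,h} + \mu^2\theta_{2,h} = 1 - \theta_{1,h}^2/\theta_{2,h} = \rho_h$ for the optimal $\mu = \theta_{1,h}/\theta_{2,h}$, and an induction on $t$ (base case $t=1$: $\tfrac1d\Exs\trace Q_0^2\to\rho_h$) yields $\tfrac1d\,\trace\,\Exs[R_t^\top R_t]\to\rho_h^t$. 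Plugging this into the display and using that $x\mapsto x^{1/t}$ is continuous and increasing gives $\limsup_{n\to\infty}\big(\Exs\|\Delta_t\|^2/(d\,\Exs\|\Delta_0\|^2)\big)^{1/t}\le(\rho_h^t)^{1/t} = \rho_h$, as claimed.

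The reduction carried out here is routine (essentially Cauchy--Schwarz), and it is precisely where the factor $d$ is paid: bounding $\lambda_{\max}(\Exs[R_t^\top R_t])\le\trace(\Exs[R_t^\top R_t])$ costs a factor $d$, which is immaterial once the $t$-th root is taken. The genuine difficulty is the ingredient imported from Theorem~\ref{TheoremIHSSRHT}: establishing that independent SRHT sketches form an asymptotically free (liberating) family so that~\eqref{EqnTraceDecoupling} applies, together with the convergence of the first two inverse moments of $U^\top S^\top SU$ to $\theta_{1,h},\theta_{2,h}$ (Lemma~\ref{TheoremSRHTlsd}) --- which in turn requires controlling the least singular value of $SU$ so that these moments remain finite in the limit. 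A last minor point: $U^\top S_t^\top S_tU$ is invertible only on an event of probability $1-o(1)$ (it has full rank once $\widetilde m\ge d$, which holds for all large $n$ since $\xi>\gamma$); setting $Q_t = 0$ on the complement changes none of the limits.
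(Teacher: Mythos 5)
Your proof is correct and follows essentially the same route as the paper: bound $\trace\bigl(\Exs[R_t^\top R_t]\,\Delta_0\Delta_0^\top\bigr)$ by $\|\Delta_0\|^2\,\trace\,\Exs[R_t^\top R_t]$ (paying the factor $d$), then reuse the asymptotic-freeness/trace-decoupling computation from the proof of Theorem~\ref{TheoremIHSSRHT} showing $\tfrac1d\trace\,\Exs[Q_0\cdots Q_{t-1}Q_{t-1}\cdots Q_0]\to\rho_h^t$. The only differences are cosmetic (conditioning on $\Delta_0$ rather than writing $\|\Delta_0\Delta_0^\top\|_2$, and your correct $1-2\mu\theta_{1,h}+\mu^2\theta_{2,h}$ versus the paper's typo).
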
 
The proofs of Theorem \ref{TheoremIHSSRHT} and \ref{TheoremIHSSRHTb} are deferred to Appendix~\ref{ProofTheoremIHSSRHT}. While providing significant computational benefits for forming the sketch $SA$, SRHT embeddings are still able to match the convergence rate of orthogonal projections, and thus, also improves on Gaussian sketches. This result follows from the observation that, althouth SRHT has much less randomness than Haar projection, their first two inverse moments behave the same asymptotically. This is formally stated in the following lemma.
\begin{lemma}[First two inverse moments of SRHT sketches] 
\label{TheoremSRHTlsd}
Let $S$ be an $m \times n$ SRHT matrix, $S_h$ be an $m \times n$ Haar matrix, and $U$ an $n \times d$ deterministic matrix with orthonormal columns. Then, the matrices $U^\top S^\top S U$ and $U^\top S_h^\top S_h U$ have the same limiting spectral distribution. Consequently, with $\theta_{1,h},\theta_{2,h}$ as defined in Lemma \ref{TheoremHaarlsd}, it holds that
\begin{align}
\label{EqnTraceExpressionsSRHT}
    &\lim_{n \to \infty} \frac{1}{d} \trace \Exs \left[(U^\top S^\top S U)^{-1} \right] = \theta_{1,h}\,,\\
    &\lim_{n \to \infty} \frac{1}{d} \trace \Exs \left[(U^\top S^\top S U)^{-2} \right] = \theta_{2,h}\,.
\end{align}
\end{lemma}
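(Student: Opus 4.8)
The plan is to first establish the claimed equality of limiting spectral distributions (l.s.d.) and then to transfer it to the first two inverse moments. \emph{Step 1 (algebraic reduction).} After discarding its zero rows, the SRHT matrix satisfies $S^\top S = O^\top B O$, where $O \defn H_n D P/\sqrt{n}$ is an $n \times n$ orthogonal matrix (here $H_n^2 = n I_n$, $D^2 = I_n$ and $P P^\top = I_n$) and $B$ is the diagonal $\mathrm{Bernoulli}(m/n)$ sampling matrix, which is almost surely an orthogonal projection onto a coordinate subspace of dimension $\wtilde m \sim \mathrm{Binomial}(m/n, n)$, with $\wtilde m / n \to \xi$ a.s. Hence $U^\top S^\top S U = V^\top B V$ with $V \defn O U$ having orthonormal columns, whereas in the Haar case $U^\top S_h^\top S_h U = V_h^\top \Lambda_m V_h$ with $V_h \defn W U$, $W$ an $n \times n$ Haar-orthogonal matrix and $\Lambda_m \defn \mathrm{diag}(I_m, 0)$. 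The nonzero eigenvalues of the $d \times d$ matrix $V^\top B V$ coincide with those of the $n \times n$ matrix $B\,(O P_U O^\top)$, where $P_U \defn U U^\top$; consequently the l.s.d. of $U^\top S^\top S U$ is obtained from that of $B\,(O P_U O^\top)$ by removing an atom at $0$ of mass $1-\gamma$ and renormalizing, and likewise in the Haar case with $B$ replaced by $\Lambda_m$ and $O$ by $W$.

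\emph{Step 2 (asymptotic freeness via liberation).} By the results on asymptotically liberating sequences of~\cite{anderson2014asymptotically}, the randomized Hadamard family $\{H_n D_n P_n / \sqrt{n}\}_n$ is asymptotically liberating (the random sign diagonal already suffices; the uniform permutation, included to rule out adversarial alignment of $U$ with the Hadamard basis, is compatible with this property). Hence the projections $O_n P_U O_n^\top$ and $B_n$ are asymptotically free, and the limiting joint distribution of the pair $(O_n P_U O_n^\top, B_n)$ is the same as that of $(W_n P_U W_n^\top, \Lambda_m)$. Since $\tfrac1n \trace P_U \to \gamma$ and $\tfrac1n \trace B_n \to \xi$ a.s., the l.s.d. of $B_n (O_n P_U O_n^\top)$ equals the free multiplicative convolution $\big((1-\gamma)\delta_0 + \gamma \delta_1\big) \boxtimes \big((1-\xi)\delta_0 + \xi \delta_1\big)$, exactly as for the Haar sketch. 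Removing the common atom at $0$ as in Step~1 shows that $U^\top S^\top S U$ and $U^\top S_h^\top S_h U$ have the same l.s.d., which is the measure $F_h$ underlying Lemma~\ref{TheoremHaarlsd}; this part follows the line of argument used in~\cite{dobriban2019asymptotics} for one-step sketching.

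\emph{Step 3 (l.s.d. to inverse moments).} Since $\xi > \gamma$, the measure $F_h$ is supported on a compact interval $[\lambda_-, \lambda_+] \subset (0, \infty)$, so $x \mapsto x^{-1}$ and $x \mapsto x^{-2}$ are bounded and continuous on its support. Combined with the a.s. weak convergence of the e.s.d. of $U^\top S^\top S U$ to $F_h$ from Steps~1--2, it only remains to prevent mass of the e.s.d. from escaping towards $0$. Because $\tfrac1d \trace (U^\top S^\top S U)^{-i} \less \sigma_{\min}(SU)^{-2i}$, it suffices to control the lower tail of $\sigma_{\min}(SU)$: I would show that $\mathbb{P}\big(\sigma_{\min}(SU)^2 \less \lambda_-/2\big)$ decays faster than any polynomial in $n$ and that $\Exs[\sigma_{\min}(SU)^{-2i}]$ stays bounded, which gives uniform integrability of $x^{-i}$ under the e.s.d. and hence $\tfrac1d \trace \Exs[(U^\top S^\top S U)^{-i}] \to \int x^{-i}\,\mathrm{d}F_h(x) = \theta_{i,h}$ for $i = 1, 2$.

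\emph{Main obstacle.} The delicate point is the tail bound for $\sigma_{\min}(SU)$ in the proportional regime $m \asymp d \asymp n$, where the standard SRHT subspace-embedding guarantees (valid for $m \gtrsim d \log d$) are too weak. I would obtain it via a concentration argument exploiting the $1$-Lipschitz dependence of $\sigma_{\min}(SU)$ on the Rademacher vector $\mathrm{diag}(D)$ together with an Efron--Stein- or Talagrand-type bound for the permutation $P$, or alternatively by importing the corresponding left-edge rigidity estimate for the SRHT from~\cite{lacotte2019faster}; the rest is the liberation machinery of~\cite{anderson2014asymptotically} plus standard free-probability bookkeeping.
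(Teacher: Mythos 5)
Your overall route is the paper's: reduce to a product of the Bernoulli sampling projection with a randomly rotated projection, invoke the asymptotic liberation results of \cite{anderson2014asymptotically} to get asymptotic freeness, identify the l.s.d.\ as $\bigl((1-\gamma)\delta_0+\gamma\delta_1\bigr)\boxtimes\bigl((1-\xi)\delta_0+\xi\delta_1\bigr)$, strip the atom at zero, and conclude that the limit coincides with the Haar one. However, your Step~2 asserts that the one-sided family $H_nD_nP_n/\sqrt n$ is already asymptotically liberating (``the random sign diagonal already suffices''), and this is not what the cited results give. Corollaries~3.5 and~3.7 of \cite{anderson2014asymptotically} apply to the bi-signed-permutation Hadamard matrix $W_n=P_n^\top D_nH_nD_nP_n$, and the paper needs a separate equality-in-distribution step (Lemma~\ref{LemmaHadamardEqualDist}, proved by commuting $PBP^\top$ and $P^\top DP$ through and using $D^2=I$) to replace $S=BH_nDP$ by $BW_n$ without changing the law of $U^\top S^\top SU$; only then is the freeness of $B_n$ and $W_nU_nU_n^\top W_n$ covered. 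As written, your liberation claim is unsupported: you would either have to re-verify the Anderson--Farrell moment conditions for the one-sided family or insert exactly this distributional trick.

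The second gap is the passage from the l.s.d.\ to the expected inverse traces, which you correctly flag but do not close, and whose proposed remedies would fail: \cite{lacotte2019faster} concerns Gaussian embeddings and contains no left-edge or smallest-singular-value estimate for the SRHT, and a Lipschitz/bounded-difference concentration argument in the signs and the permutation only controls fluctuations of $\sigma_{\min}(SU)$ around its typical value---it cannot by itself locate that value away from zero in the proportional regime $m\asymp d$, which is precisely the hard part (the standard SRHT subspace-embedding bounds require $m\gtrsim d\log d$). Note also that a.s.\ weak convergence of the e.s.d.\ to a law supported in $[\lambda_-,\lambda_+]\subset(0,\infty)$ does not imply convergence of the smallest eigenvalue to $\lambda_-$, so the uniform integrability you need is not a consequence of Steps~1--2. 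The paper takes a different, lighter route here: having shown $F_S=F_h$, it observes that $\theta_{1,h}$ and $\theta_{2,h}$ are computed purely from the limit law ($\gamma\theta_{i,h}=\int_{x\neq0}x^{-i}\,\mathrm{d}F_C(x)$, obtained from the Stieltjes transform of $F_C$ together with the support bound of Lemma~\ref{lem: support of Fh}), so the Haar computation of Appendix~\ref{ProofTraceCalculationsHaar} carries over verbatim, with the random rank $\widetilde d$ handled by the identity of Appendix~\ref{sec:appidentity}. So the ``main obstacle'' you set up is a detour the paper avoids, and in your proposal it remains unresolved.
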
 
The proof is based on recent results about \emph{asymptotically liberating sequences} from the free probability literature \cite{anderson2014asymptotically}, which proves the asymptotic freeness for Hadamard matrices. This technique is also used in \cite{dobriban2019asymptotics} to study SRHT. Specifically, they defined the bi-signed-permutation Hadamard matrix $W=P^\top DHDP$, where $H$ is a Hadamard matrix, $D$ is a sign-flipping diagonal matrix, and $P$ is a permutation. Corollary 3.5, 3.7 of \cite{anderson2014asymptotically} showed that the Bernoulli-sampling diagonal matrix $B$ and $WUU^\top W$ are asymptotically free in the non-commutative probability space of random matrices. Another observation is that, by changing the definition of $S$ to $S=BP^\top DHDP=BW$, the l.s.d. of $U^\top S^\top SU$ remain the same as when $S=BHDP$. The asymptotic freeness shows that the l.s.d. of $U^\top S^\top SU$ for $S$ an SRHT is the same as when $S$ is a Haar matrix. So we get the same results as in Lemma \ref{TheoremHaarlsd}. The detailed proof is defered to Appendix~\ref{sec: moment SRHT}.

In Figure~\ref{FigHaarSRHTEmpiricalDensity}, we verify that the empirical densities with Haar and SRHT matrices are indeed very close.
\begin{figure}[h!]
	\centering
	\includegraphics[width=0.8\linewidth]{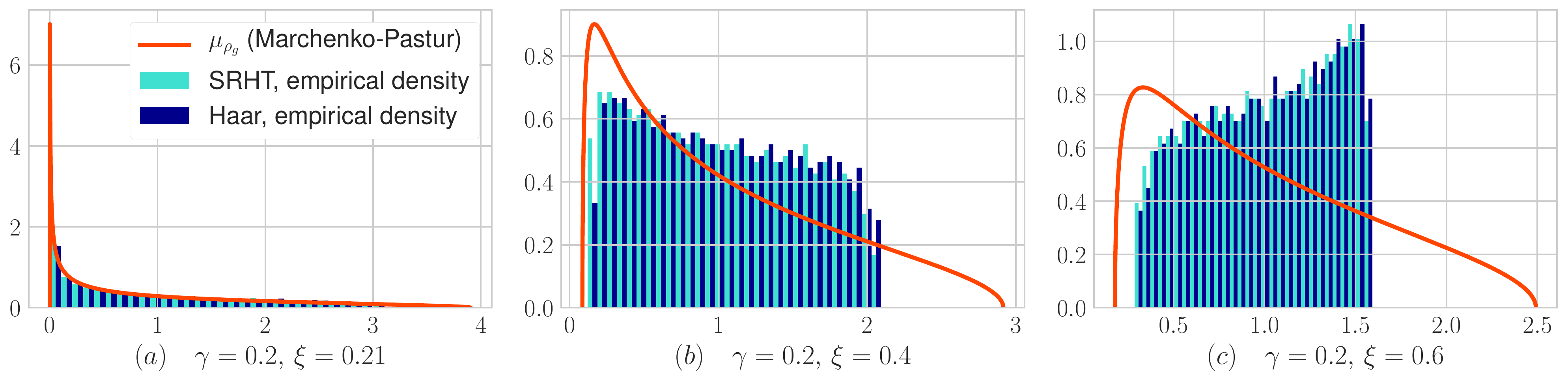}
	\caption{Empirical densities of the matrices $\frac{n}{m} U^\top S^\top S U$ for $S$ an $m \!\times\! n$ Haar matrix and SRHT matrix, versus Marchenko-Pastur density with shape parameter $d/m$. We use $n=4096$, $d=820$ and $m \in \{860, 1640, 2450\}$, so that $\gamma \approx 0.2$ and $\xi \in \{0.21, 0.4, 0.6\}$.}
	\label{FigHaarSRHTEmpiricalDensity}
\end{figure}

\section{Complexity Analysis}
\label{comp}
Let us now turn to a complexity analysis of the IHS with SRHT embeddings, and compare it, in an asymptotic sense, to the complexity of the standard pre-conditioned conjugate gradient method~\cite{rokhlin2008fast}. The latter uses a sketch $SA$ to compute a pre-conditioning matrix $P$, such that $AP^{-1}$ has a small condition number, and then it solves the least-squares problem $\min_y \|AP^{-1}y-b\|^2$, using the conjugate-gradient method. As for the IHS, it can be decomposed into three parts: sketching, factoring (computing $P$ and $AP^{-1}$ versus computing $H_t$), and iterating. The pre-conditioned conjugate gradient prescribes the sketch size $m \approx d \log d$ to guarantee convergence with high-probability. This lower bound is based on the finite-sample bounds on the extremal eigenvalues of the matrix $U^\top S^\top S U$ derived by~\cite{tropp2011improved}. Then, given $\varepsilon \!>\! 0$ and with $m \approx d \log d$, the resulting complexity to achieve $\|\Delta_t\|^2 \less \varepsilon$ scales as $\mathcal{C}_c \asymp nd \log d + d^3 \log d + nd \log(1/\varepsilon)$, where $nd \log d$ is the cost of forming $S A$, the term $d^3 \log d$ is the factoring cost, and $nd \log(1/\varepsilon)$ is the per-iteration cost times the number of iterations. In contrast, we obtain that the IHS with the SRHT can use $m \approx d$, with resulting complexity $\mathcal{C}_n \asymp (nd \log d + d^3 + nd) \log(1/\varepsilon)$. Note that the number of iterations multiplies the sum of the sketching, factoring and per-iteration costs, and this is due to refreshing the sketches. Then, treating the term $\log(1/\varepsilon)$ as a constant independent of the dimensions, we find that, as $n,d,m$ grow to infinity, we have that $C_n/C_c \asymp 1/\log d$.

\section{Numerical Simulations}
\label{ns}

\subsection{Comparison of the different variants of the iterative Hessian sketch}

We evaluate the performance of the IHS with refreshed Haar/SRHT sketches against refreshed Gaussian sketches. 

First, we generate a synthetic data matrix $A \in \real^{n \times d}$ with exponential spectral decay (its $j$-th singular value of $A$ is $\sigma_j = 0.98^j$) and where $n=8192$ and $d=800$. We consider the sketch sizes $m \in \{980, 2450, 4100\}$. For the SRHT, we use the step size $\mu_t=\theta_{1,h}/\theta_{2,h}$ prescribed in Theorem~\ref{TheoremIHSSRHT}, where we replace $\xi$ and $\gamma$ by their finite sample approximations $\xi \approx \frac{m}{n}$ and $\gamma \approx \frac{d}{n}$. For refreshed Gaussian embeddings, we use the optimal parameters $\mu_t$ and $\beta_t$ derived in~\cite{lacotte2019faster}. Results are reported in Figure~\ref{figcomparisonihssynthetic}. As $m$ increases, Haar/SRHT embeddings are increasingly better compared to Gaussian projections. Further, the empirical curves match closely our theoretical predictions: the algorithmic parameters derived from our asymptotic analysis are useful in practice when they are replaced by their finite-sample approximations.
\begin{figure}[h!]
	\centering
	\includegraphics[width=0.9\linewidth]{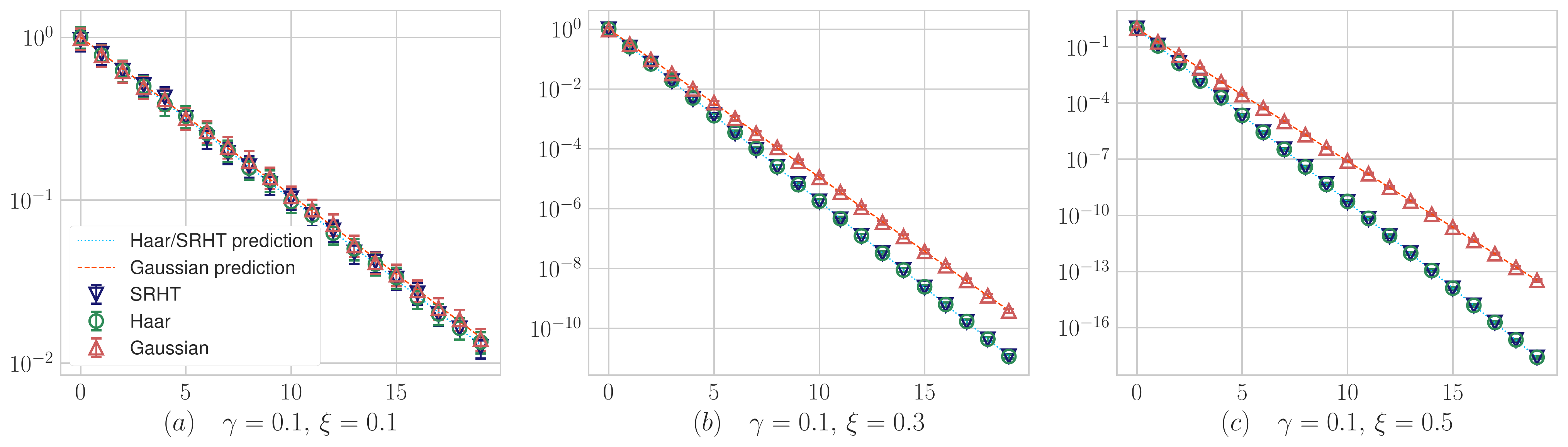}
	\caption{Synthetic dataset: Error $\|\Delta_t\|^2/\|\Delta_0\|^2$ versus number of iterations for the iterative Hessian sketch: (a) $m=980$, (b) $m=2450$ and (c) $m=4100$. We average over $50$ independent trials and empirical standard deviations are shown in the form of error bars.}
	\label{figcomparisonihssynthetic}
\end{figure}
Second, we carry out a similar experiment with the CIFAR10 dataset, for which we consider one-vs-all classification. Here, we have $n=60000$, $d=3072$ and we use the sketch sizes $m \in \{6000, 18000, 30000\}$. Results are reported in~\ref{figcomparisonihscifar}, and we observe similar quantitative results as for the aforementioned synthetic dataset.

\begin{figure}[h!]
	\centering
	\includegraphics[width=0.9\linewidth]{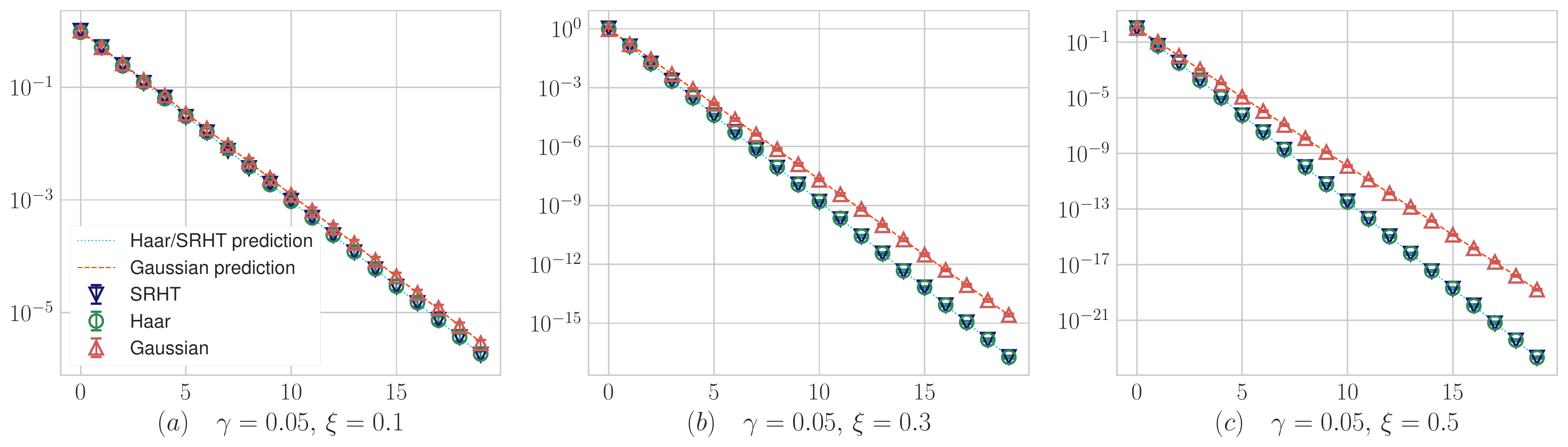}
	\caption{CIFAR10 dataset: Error $\|\Delta_t\|^2/\|\Delta_0\|^2$ versus number of iterations for the iterative Hessian sketch: (a) $m=6000$, (b) $m=18000$ and (c) $m=30000$. We average over $50$ independent trials and empirical standard deviations are shown in the form of error bars.}
    \label{figcomparisonihscifar}
\end{figure}

\subsection{Comparison of the iterative Hessian sketch to standard iterative solvers}

We compare the IHS with the SRHT against the conjugate gradient (CG) method and its preconditioned (pCG) version~\cite{rokhlin2008fast}. We also consider a variant of the IHS, for which we do not refresh the embedding at every iteration. We generate a synthetic data matrix $A \in \real^{n \times d}$ with exponential spectral decay ($\sigma_j = 0.98^j$), $n=4096$ and $d=200$. We consider the sketch sizes $m \in \{1000, 1500, 2000\}$. We observe that the IHS which refreshes embeddings at every iteration has the best convergence rate. More generally, the higher this update frequency, the better the performance. In comparison, CG has the worst convergence rate, which is expected since the data matrix is ill-conditioned, and pCG performs slightly worse than the IHS with update frequency equal to $1$.

\begin{figure}[h!]
	\centering
	\includegraphics[width=0.9\linewidth]{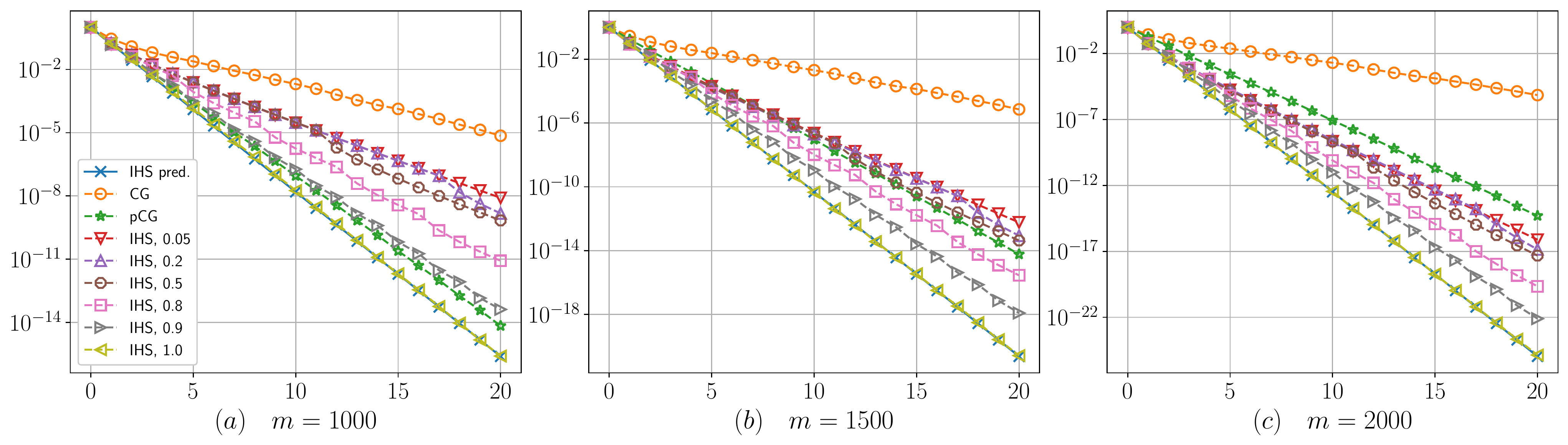}
	\caption{Error $\|\Delta_t\|^2/\|\Delta_0\|^2$ versus number of iterations for the iterative Hessian sketch with the SRHT and different sketch sizes. We average over $50$ independent trials. For instance, 'IHS, $0.2$' refers to the IHS with update frequency equal to $0.2$. For clarity, we do not show error bars for the mean empirical standard deviation which are barely visible.}
	\label{figcomparisonsolvers}
\end{figure}

\begin{ack}
This work was partially supported by the National Science Foundation under grants IIS-1838179 and ECCS-2037304, Facebook Research, Adobe Research and Stanford SystemX Alliance.
\end{ack}



\begin{appendix}
	\section{Proofs of main theorems}
	\label{appen}
	
	\subsection{Calculations of $\theta_{1,h}$ and $\theta_{2,h}$ for Haar sketch}
	\label{ProofTraceCalculationsHaar}
	We first prove some lemmas and provide the proof of \ref{TheoremHaarlsd} in Section \ref{prf: prf first second moment}.
	
	This lemma characterizes the Stieltjes transform of the l.s.d. of $S_nU_n$.
	\begin{lemma}[Stieltjes transform of l.s.d. of $S_nU_n$]
		\label{TheoremStieltjesTransformHaar}
		We set $S_{1,n}=S_nU_n$. Then the matrix $S_{1,n}^\top S_{1,n}$ admits a l.s.d.~whose Stieltjes transform $m_h$ is given by
		\begin{align}
		\label{EqnStieltjesTransformHaar}
		m_h(z) = \frac{z (2\gamma-1) + \xi - \gamma - \sqrt{(\gamma+\xi-2+z)^2 + 4(z-1)(1-\gamma)(1-\xi)}}{2 \gamma z (1-z)}\,,
		\end{align}
		for any $z \in \mathbb{C} \setminus \real_+$.
	\end{lemma}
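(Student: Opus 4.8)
The plan is to recognize $S_{1,n}^\top S_{1,n} = U_n^\top S_n^\top S_n U_n$ as (a compression of) a product of two independent orthogonal projections, and to compute its limiting spectrum by free multiplicative convolution, then read off the Stieltjes transform.

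First I would realize $S_n$ as the first $m$ rows of an $n\times n$ Haar orthogonal matrix $O_n$, so that $P_n \defn S_n^\top S_n = O_n^\top \Pi_m O_n$ is a uniformly random rank-$m$ orthogonal projection, where $\Pi_m = \mathrm{diag}(I_m,0)$; write $Q_n \defn U_nU_n^\top$ for the fixed rank-$d$ projection. Then $U_n^\top S_n^\top S_n U_n$ and the $n\times n$ matrix $Q_nP_nQ_n = Q_n^{1/2}P_nQ_n^{1/2}$ share the same nonzero eigenvalues (using $Q_n^2 = Q_n$ and the fact that $AB$ and $BA$ have the same nonzero spectrum). Since $\mathrm{range}(Q_n)$ is fixed and $\ker(P_n)$ is a uniformly random $(n-m)$-dimensional subspace, and $\gamma < \xi$, their intersection is a.s.\ trivial, so $Q_nP_nQ_n$ has rank exactly $d$ almost surely; hence its e.s.d.\ equals $\tfrac{d}{n}F_{S_{1,n}^\top S_{1,n}} + \tfrac{n-d}{n}\delta_0$. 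Consequently, once we know (from freeness, below) that $Q_nP_nQ_n$ admits a l.s.d.\ $\nu$, it follows that $F_h$ exists with $\nu = \gamma F_h + (1-\gamma)\delta_0$, which at the level of Stieltjes transforms reads
\[
  m_h(z) \;=\; \frac{1}{\gamma}\left( m_\nu(z) + \frac{1-\gamma}{z} \right).
\]

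Next, since $P_n$ is invariant in law under conjugation by any fixed orthogonal matrix, the pair $(P_n,Q_n)$ is asymptotically free (see \cite{voiculescu1992free,anderson2010introduction}), so $\nu$ exists and equals $\mu_P \boxtimes \mu_Q$, where $\mu_P = (1-\xi)\delta_0 + \xi\delta_1$ and $\mu_Q = (1-\gamma)\delta_0 + \gamma\delta_1$ are the obvious limiting spectral distributions of $P_n$ and $Q_n$. A short computation from \eqref{EqnSTransform} gives $S_{\mu_P}(z) = \tfrac{z+1}{z+\xi}$ and $S_{\mu_Q}(z) = \tfrac{z+1}{z+\gamma}$, hence $S_\nu(z) = S_{\mu_P}(z)S_{\mu_Q}(z) = \tfrac{(z+1)^2}{(z+\xi)(z+\gamma)}$. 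Plugging this back into \eqref{EqnSTransform}, with $w = \tfrac{z+1}{zS_\nu(z)} = \tfrac{(z+\xi)(z+\gamma)}{z(z+1)}$ one gets $m_\nu(w) = -zS_\nu(z) = -\tfrac{z+1}{w}$, and eliminating $z$ amounts to solving the quadratic $(w-1)z^2 + (w-\xi-\gamma)z - \xi\gamma = 0$. This yields an explicit closed form for $m_\nu$ (a rational function plus one square root); substituting it into the displayed identity above, together with the elementary identity $(w-\xi-\gamma)^2 + 4(w-1)\xi\gamma = (\gamma+\xi-2+w)^2 + 4(w-1)(1-\gamma)(1-\xi)$, gives exactly \eqref{EqnStieltjesTransformHaar} up to a choice of branch. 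The branch is pinned down by requiring $m_h$ to be the Stieltjes transform of a probability measure on $(0,\infty)$: it must be analytic on $\mathbb{C}\setminus\real_+$ with $m_h(z)\sim -1/z$ as $z\to\infty$; expanding the two candidates as $z\to-\infty$ shows only the one with the minus sign satisfies this (the other behaves like $\tfrac{1-\gamma}{\gamma z}$), and analytic continuation extends the formula to all of $\mathbb{C}\setminus\real_+$.

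I expect the main obstacle to be the first two steps rather than the algebra: one must justify carefully that passing from the rectangular matrices $S_n,U_n$ to the genuine $n\times n$ projections $P_n,Q_n$ is legitimate, that the e.s.d.\ of $Q_nP_nQ_n$ converges weakly almost surely (not merely its moments), and that $P_n$ and $Q_n$ are asymptotically free so that the $\boxtimes$ machinery applies. Once the l.s.d.\ is known to exist and equal $\mu_P\boxtimes\mu_Q$, the remaining work is elementary but somewhat tedious, with the routine pitfall of the square-root branch. An alternative to invoking free probability directly is to apply Theorem~4.11 of \cite{couillet2011random} to matrices of the form $D^{1/2}WTW^\top D^{1/2}$ after diagonalizing $Q_n$ by absorbing its eigenvectors into the Haar matrix; this trades the freeness citation for a slightly more involved reduction but leads to the same fixed-point characterization.
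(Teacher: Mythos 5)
Your proposal is correct, and for the core computation it takes a different route than the paper's own proof of this lemma. The paper also passes to an $n\times n$ matrix with the same nonzero spectrum (its $C_n = \Pi_m \mathbb{W}_n \Pi_d \mathbb{W}_n^\top \Pi_m$, obtained by embedding $S_n, U_n$ into full orthogonal matrices), and likewise corrects for the point mass at zero via $m_h(z) = \tfrac{1}{\gamma} m_C(z) + \tfrac{1-\gamma}{\gamma z}$; but it then computes the l.s.d.\ of $C_n$ by invoking Theorem~4.11 of \cite{couillet2011random}, i.e.\ a fixed-point system for the $\eta$-transform, rather than by free multiplicative convolution. You instead use asymptotic freeness of the uniformly random projection $P_n = S_n^\top S_n$ and the fixed projection $Q_n = U_nU_n^\top$, take $S$-transforms of the two Bernoulli laws, multiply, and invert — which is exactly the computation the paper performs later for the SRHT case (Appendix~\ref{sec: moment SRHT}, where $S_G(y) = \tfrac{(y+1)^2}{(y+\xi)(y+\gamma)}$ and $m_G = m_C$), there justified via the liberating-sequence results of \cite{anderson2014asymptotically}. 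Your route therefore buys a unified treatment of the Haar and SRHT computations and a cleaner algebraic path (your quadratic in $z$, the identity between the two discriminants, and the branch selection via $m_h(z)\sim -1/z$ all check out and reproduce \eqref{EqnStieltjesTransformHaar}); the paper's route avoids having to certify asymptotic freeness for Haar \emph{orthogonal} conjugation. On that last point, note a minor citation gap rather than a mathematical one: the classical freeness statements in \cite{voiculescu1992free,anderson2010introduction} are usually phrased for Haar unitary matrices, so for the orthogonal group you should invoke the orthogonal analogue (e.g.\ Collins--\'Sniady) or the liberating-sequence framework, together with the standard argument upgrading a.s.\ convergence of moments to a.s.\ weak convergence (the limit being compactly supported and moment-determinate); your rank-$d$ argument for the e.s.d.\ decomposition is fine since $\gamma<\xi$.
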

	\begin{proof}
		First, observe that since both $S_n$ and $U_n$ are rectangular orthogonal matrices, we can embed them into full orthogonal matrices as $\mathbb{S}_n = \left(\begin{array}{c}S_n \\ S_n^\perp\end{array}\right)$ and $\mathbb{U}_n = \left(\begin{array}{cc}U_n & U_n^\perp\end{array}\right)$. Then, we can write
		\begin{align}
		\label{EqnFactoredOrtho}
		S_{1,n} = \left(\begin{array}{cc}I_m & 0\end{array}
		\right)\mathbb{S}_n\mathbb{U}_n\left(\begin{array}{c}I_d \\ 0 \end{array}
		\right)\,.
		\end{align}
		Let $\mathbb{W}_n=\mathbb{S}_n\mathbb{U}_n$, which is an $n\times n$ Haar matrix due to the orthogonal invariance of the Haar distribution. Then, we define
		\begin{align}
		\label{EqnCn}
		C_n \defn \left(\begin{array}{cc}S_{1,n} S_{1,n}^\top & 0\\0 & 0\end{array}\right) = \left(\begin{array}{cc}I_m & 0\\0 & 0\end{array}\right)
		\mathbb{W}_n
		\left(\begin{array}{cc} I_d & 0\\0&0\end{array}\right)
		\mathbb{W}_n^\top
		\left(\begin{array}{cc}I_m & 0\\0 & 0\end{array}\right)\,.
		\end{align}
		The matrix $C_n$ is related to our matrix of interest $S_{1,n}^\top S_{1,n}$, as they have exactly the same non-zero eigenvalues. Thus, as a first step to establish Lemma~\ref{TheoremStieltjesTransformHaar}, we characterize the l.s.d.~of $C_n$.
		
		%
		The matrix $C_n$ admits a l.s.d.~$F_C$, whose Stieltjes transform $m_C$ is given by
		\begin{align}
		\label{EqnStietljesC}
		m_C(z) = \frac{z+\gamma+\xi-2-\sqrt{(\gamma+\xi-2+z)^2+4(z-1)(1-\gamma)(1-\xi)}}{2z(1-z)}\,,
		\end{align}
		for any $z \in \mathbb{C} \setminus \real_+$.
		%
		%
		The above expression~\eqref{EqnCn} of the matrix $C_n$ has the required form to apply Theorem 4.11 by~\cite{couillet2011random}, and hence characterize the e.s.d.~of $C_n$ through its $\eta$-transform which has to satisfy a fixed-point equation. We defer details of the proof to Section \ref{sec: esd of Cn}.
		Now, we use the fact that the matrices $S_{1,n}^\top S_{1,n}$ and $C_n$ have the same non-zero eigenvalues. Almost surely, there are exactly $d$ of them, which we denote $\lambda_1,\dots,\lambda_d$. Then, the e.s.d.~$F_{C_n}$ of $C_n$ can be decomposed as
		\begin{align}
		\label{EqnESDCn}
		F_{C_n}(x) &= \left(1-\frac{d}{n}\right) \mathbf{1}_{\{x \gre 0\}} + \frac{1}{n} \sum_{i=1}^d \mathbf{1}_{\{x \gre \lambda_i\}} = \left(1-\frac{d}{n}\right) \mathbf{1}_{\{x \gre 0\}} + \frac{d}{n} \, F_{h,n}(x)\,,
		\end{align}
		where $F_{h,n}$ is the e.s.d.~of $S_{1,n}^\top S_{1,n}$. Taking the limit $n \to \infty$, we find that $F_{1,n}$ converges weakly almost surely to
		\begin{align}
		\label{EqnFhFc}
		F_h(x) = \frac{1}{\gamma} \left( F_{C}(x) - (1-\gamma)\mathbf{1}_{\{x \gre 0\}} \right)\,.
		\end{align}
		By definition of $m_h$ and using~\eqref{EqnFhFc}, it follows that for $z \in \mathbb{C} \setminus \real_+$
		\begin{align}
		m_h(z) = \int \frac{1}{x-z} \, \mathrm{d}F_h(x) &= \frac{1}{\gamma} \int \frac{1}{x-z} \, \mathrm{d}F_C(x) - \frac{1-\gamma}{\gamma} \int \frac{1}{x-z} \, \delta_0(x) \mathrm{d}x\\
		&= \frac{1}{\gamma} m_C(z) + \frac{1-\gamma}{\gamma z}\,.
		\end{align}
		Plugging-in the expression of $m_C$, we obtain the claimed formula~\eqref{EqnStieltjesTransformHaar} for $m_h$.
		
	\end{proof}
	
	We will need the following result regarding the support of $F_h$, which is proved in Appendix~\ref{ProofLemmaSupportFh}.
	\begin{lemma}
		\label{lem: support of Fh}
		The support of $F_h$ satisfies
		\begin{align}
		\label{EqnLowerBoundSupportFh}
		\inf \mathrm{supp}(F_h) \gre \frac{(1-\sqrt{\rho_g})^2}{\left(1+\frac{1}{\sqrt{\xi}}\right)^2}\,.
		\end{align}
	\end{lemma}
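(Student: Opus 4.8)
The plan is to locate $\inf\operatorname{supp}(F_h)$ by analyzing where the Stieltjes transform $m_h$ from Lemma~\ref{TheoremStieltjesTransformHaar} fails to be analytic on the positive real axis, and then bound that edge from below by a quantity involving the Marchenko--Pastur support. Recall that $F_h$ is the l.s.d.\ of $S_{1,n}^\top S_{1,n}$ with $S_{1,n}=S_nU_n$, and that $\inf\operatorname{supp}(F_h)$ is the smallest $x>0$ at which $m_h$ has a branch point or singularity. Since $m_h$ is given in closed form, its non-analyticity on $\real_+$ is governed entirely by the zero set of the radicand
\begin{align*}
R(z) \defn (\gamma+\xi-2+z)^2 + 4(z-1)(1-\gamma)(1-\xi)\,,
\end{align*}
together with the poles at $z=0,1$. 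So the first step is: expand $R(z)$ as a quadratic in $z$, compute its two real roots $z_-\le z_+$ (the discriminant factors nicely — one expects it to be a perfect square or to simplify using the substitution $u=1-\gamma$, $v=1-\xi$), and identify $\inf\operatorname{supp}(F_h)=z_-$ after checking that $z_-\in(0,1)$ and that the pole at $z=0$ does not contribute a mass point on the support of the absolutely continuous part (the $\frac{1-\gamma}{\gamma z}$ term was precisely subtracted off in passing from $F_C$ to $F_h$, so $0\notin\operatorname{supp}(F_h)$).

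Once $z_-$ is in hand as an explicit algebraic expression in $\gamma,\xi$, the second step is purely an inequality: show
\begin{align*}
z_- \gre \frac{(1-\sqrt{\rho_g})^2}{\left(1+\tfrac{1}{\sqrt{\xi}}\right)^2}\,,
\end{align*}
where $\rho_g=\gamma/\xi$. I would clear denominators — both sides are positive on the relevant parameter range $\gamma<\xi<1$ — and reduce to a polynomial inequality in $\sqrt{\gamma},\sqrt{\xi}$ (or in $\sqrt{\rho_g}$ and $\sqrt{\xi}$). A natural reparametrization is $s=\sqrt{\rho_g}\in(0,1)$ and $t=\sqrt{\xi}\in(0,1)$ with $\gamma=s^2t^2$; after substitution the claimed bound should become something like a manifestly nonnegative combination (a sum of squares, or a product of obviously-signed factors such as $(1-s)^2$, $(t^2-s^2t^2)$, etc.). The key algebraic fact driving everything is the same $(\xi-\gamma)^2>0$ identity that appears throughout the paper, so I expect the final reduced inequality to be equivalent to $(\xi-\gamma)^2\cdot(\text{something positive})\gre 0$.

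The main obstacle is the second step: the right-hand side is not the natural "edge" coming out of the free-probability computation — it is a convenient but lossy lower bound chosen to make the later argument (presumably controlling $\|Q_t\|_2$ or the conditioning of $S_{1,n}$ uniformly) go through. So the challenge is not conceptual but bookkeeping: verifying that the exact edge $z_-$, which carries a square-root of a quadratic, dominates this particular rational surrogate over the entire two-dimensional region $\{0<\gamma<\xi<1\}$. I would guard against sign errors by first checking the inequality numerically at a few representative points (e.g.\ $\gamma\to 0$, $\xi\to 1$, and $\xi\to\gamma$, where both sides should match or the gap should close), and by confirming the boundary behavior: as $\xi\to\gamma^+$ we have $\rho_g\to 1$ so the RHS $\to 0$, consistent with $z_-\to 0$ (the matrix becomes singular), and as $\xi\to 1^-$ the RHS $\to (1-\sqrt{\gamma})^2/4$, which should be comparable to $z_-$. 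A secondary, minor obstacle is justifying rigorously that $\inf\operatorname{supp}(F_h)$ really equals the smaller real root of $R$ rather than some other feature of $m_h$: this follows from the general theory (the support is the complement of the set where $m_h$ extends analytically with real values, and $m_h$ here is a rational function of $z$ and $\sqrt{R(z)}$), but I would state it cleanly, noting that on $(z_-,z_+)$ the radicand $R$ is negative so $m_h$ has nonzero imaginary part, forcing that interval into the support, while on $(0,z_-)$ one checks $R>0$ and $m_h$ real, so that interval is in the resolvent set.
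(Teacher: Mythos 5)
Your plan is sound and completable, but it takes a genuinely different route from the paper. The paper's proof is a short stochastic comparison: writing an independent Gaussian matrix as $S_g=\Omega_\ell\Sigma\Omega_r$ with $\Omega_r\overset{\mathrm{d}}{=}S$, it bounds $\sigma_{\min}(SU)\gre\sigma_{\min}(\Omega_\ell\Sigma SU)/\|\Sigma\|_2$ and then invokes the known almost-sure limits $\sigma_{\min}(S_gU)\to 1-\sqrt{\rho_g}$ and $\|\Sigma\|_2\to 1+1/\sqrt{\xi}$, never touching the Stieltjes transform; this is why the stated bound has exactly that "lossy" Gaussian form. Your approach instead reads the edge off the explicit $m_h$: the radicand $R(z)=(\gamma+\xi-2+z)^2+4(z-1)(1-\gamma)(1-\xi)$ is a quadratic in $z$ whose roots do factor as you anticipated, $z_\pm=\bigl(\sqrt{\xi(1-\gamma)}\pm\sqrt{\gamma(1-\xi)}\bigr)^2$, so you in fact obtain the exact (sharper) edge $\inf\mathrm{supp}(F_h)=z_-$, and the claimed inequality then reduces, after rationalizing both sides via $z_-^{1/2}=(\xi-\gamma)/\bigl(\sqrt{\xi(1-\gamma)}+\sqrt{\gamma(1-\xi)}\bigr)$ and $(1-\sqrt{\rho_g})/(1+1/\sqrt{\xi})=(\xi-\gamma)/\bigl((\sqrt{\xi}+\sqrt{\gamma})(1+\sqrt{\xi})\bigr)$, to the trivial bound $\sqrt{\xi(1-\gamma)}+\sqrt{\gamma(1-\xi)}\less\sqrt{\xi}+\sqrt{\gamma}\less(\sqrt{\xi}+\sqrt{\gamma})(1+\sqrt{\xi})$, so no sum-of-squares gymnastics is needed. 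Two details you flag should indeed be handled: $F_h$ can carry an atom at $1$ when $\gamma+\xi>1$ (it does not affect the infimum since $z_-<1$), and at $z=0$ the numerator of $m_h$ vanishes on the correct branch (since $\sqrt{R(0)}=\xi-\gamma$), so there is no pole and $(0,z_-)$ lies in the resolvent set as you claim. In exchange for this extra bookkeeping, your argument buys the exact support edge, whereas the paper's argument is shorter and purely probabilistic but only delivers the surrogate lower bound it needs.
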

	Thus, the support of $F_h$ is bounded away from $0$, so is the intersection of the support of $F_C$ and $\real^*$. Further, the distribution $F_C$ has a point mass at $0$ equal to $1-\gamma$. We now turn to the trace calculations in Lemma \ref{TheoremHaarlsd}.

	\subsubsection{Proof of Lemma \ref{TheoremHaarlsd}}
	\label{prf: prf first second moment}
	\begin{enumerate}
		\item \textbf{Computing $\theta_{1,h}$}
		
		Using the facts that $F_C$ has support within $[0,+\infty)$ and a point mass equal to $(1-\gamma)$ at $0$, its $\eta$-transform $\eta_C$ is well-defined on $\{z \in \real \mid z > 0\}$, and, for $z > 0$, it can be decomposed as
		\begin{align}
		\label{Eqn1Theta1}
		\eta_C(z) = 1-\gamma + \int_{x\neq0}\frac{1}{1+zx} \mathrm{d}F_C(x)\,.
		\end{align}
		The function $\frac{1}{x}$ is integrable on the set $\{x>0\}$ with respect to $F_C$, since the support of $F_C$ on $\real^*$ is bounded away from $0$. Since $|\frac{z}{1+xz}|<\frac{1}{x}$ when $z>0, x>0$, it follows by the dominated convergence theorem that
		\begin{align}
		\lim_{z\rightarrow\infty}\int_{x\neq0}\frac{z}{1+xz} \mathrm{d}F_C(x) = \int_{x\neq0} \lim_{z\rightarrow\infty} \frac{z}{1+xz} \mathrm{d}F_C(x) = \int_{x\neq0}\frac{1}{x} \mathrm{d}F_C(x)\,.
		\end{align}
		Using~\eqref{Eqn1Theta1}, it follows that
		\begin{align}
		\label{EqnLimit1}
		\lim_{z\rightarrow\infty}z \left(\eta_C(z)-(1-\gamma)\right) = \int_{x \neq 0} \frac{1}{x} \, \mathrm{d}F_C(x)\,,
		\end{align}
		On the other hand, we have that
		\begin{align}
		\lim_{z\rightarrow\infty}\eta_C(z) &= (1-\gamma) + \lim_{z\rightarrow\infty}\int_{x\neq0}\frac{1}{1+zx}\,\mathrm{d}F_C(t)\\
		&= (1-\gamma) + \int_{x\neq0}\lim_{z\rightarrow\infty}\frac{1}{1+zx}\, \mathrm{d} F_C(x)\\
		&=1-\gamma \label{EqnLimit2}\,.
		\end{align}
		where the second equality is again justified by the dominated convergence theorem. Subtracting $1-\gamma$ from both sides of \eqref{EqnetaCfixedpoint}, multiplying by $z\left(1+\frac{\xi-1}{\eta_C(z)}\right)$ and letting $z \rightarrow \infty$, we obtain
		\begin{align*}
		\lim_{z\rightarrow\infty} z\left(1+\frac{\xi-1}{\eta_C(z)}\right)\left(\eta_C(z)-(1-\gamma)\right)&=\lim_{z\rightarrow\infty}z \left(1+\frac{\xi-1}{\eta_C(z)}\right)
		\left(\frac{\gamma}{1+z(1+\frac{\xi-1}{\eta_C(z)})}\right)\,.
		\end{align*}
		Note that the right-hand side of the above equation is equal to $\gamma$, and the left-hand side satisfies
		\begin{align*}
		\lim_{z\rightarrow\infty} z \left(1+\frac{\xi-1}{\eta_C(z)}\right) \left(\eta_C(z)-(1-\gamma)\right)
		&=\lim_{z\rightarrow\infty} z \left(\eta_C(z)-(1-\gamma)\right) \left(1+\frac{\xi-1}{1-\gamma}\right)\\
		&= \frac{\xi-\gamma}{1-\gamma} \cdot \int_{x\neq0} \frac{1}{x}\,\mathrm{d}F_C(x),
		\end{align*}
		where we used~\eqref{EqnLimit1} and~\eqref{EqnLimit2}. This shows that $\gamma=\frac{\xi-\gamma}{1-\gamma}\int_{x\neq0}\frac{1}{x}\, \mathrm{d}F_C(x)$. We conclude by observing that
		\begin{align*}
		\theta_{1,h} = \lim_{n \to \infty} \frac{1}{d} \trace \Exs\left[(S_{1,n}^\top S_{1,n})^{-1}\right] = \frac{1}{\gamma} \cdot \lim_{n \to \infty} \Exs\left[ \frac{1}{n} \sum_{i=1}^d \frac{1}{\lambda_i} \right] = \frac{1}{\gamma} \int_{x \neq 0} \frac{1}{x} \, \mathrm{d}F_C(x)\,,
		\end{align*}
		and consequently, $\theta_{1,h} = \frac{1-\gamma}{\xi - \gamma}$, which is the claimed result.

		\item \textbf{Computing $\theta_{2,h}$}
		
		Unrolling its definition, we have that
		\begin{align*}
		\theta_{2,h} = \lim_{n \to \infty} \frac{1}{d} \trace \Exs\left[ (S_{1,n}^\top S_{1,n})^{-2} \right] = \frac{1}{\gamma} \cdot \lim_{n \to \infty} \Exs\left[\frac{1}{n} \sum_{i=1}^d \frac{1}{\lambda_i^2}\right] = \frac{1}{\gamma} \int_{\{x\neq0\}}\frac{1}{x^2}\, \mathrm{d}F_C(x)\,,
		\end{align*}
		where the limit in the third equation holds and is finite since $F_C$ has support bounded away from $0$ on $\real^*$. By definition of $m_C$ and using the fact that $F_C$ has point mass $1-\gamma$ at $0$, we get that 
		\begin{align*}
		\frac{\mathrm{d}m_C(z)}{\mathrm{d}z}=\int\frac{1}{(x-z)^2} \, \mathrm{d} F_C(x)=\frac{1-\gamma}{z^2}+\int_{\{x\neq0\}}\frac{1}{(x-z)^2} \, \mathrm{d}F_C(x)\,.
		\end{align*}
		Using again the fact that $F_C$ has support bounded away from $0$ on $\real^*$ and the dominated convergence theorem, we have that $\gamma \theta_{2,h} = \lim_{z \to 0} \int_{x \neq 0}\frac{1}{(x-z)^2} \, \mathrm{d}F_C(x)$, and thus,
		\begin{align*}
		\gamma \theta_{2,h} = \lim_{z\rightarrow0} \left\{\frac{\mathrm{d}m_C(z)}{\mathrm{d}z}-\frac{1-\gamma}{z^2} \right\}\,.
		\end{align*}
		We denote
		\begin{align*}
		&\triangle \defn (\gamma+\xi-2+z)^2+4(z-1)(1-\gamma)(1-\xi)\,,\\
		&\triangle' \defn \frac{\mathrm{d}\triangle}{\mathrm{d}z}=2(z+\gamma+\xi-2)+4(1-\gamma)(1-\xi)\,.
		\end{align*}
		Then, using the expression~\eqref{EqnStietljesC} of $m_C$ and taking the derivative, it follows that
		\begin{align}
		\frac{\mathrm{d}m_C(z)}{\mathrm{d}z}-\frac{1-\gamma}{z^2} &=\frac{1-\frac{1}{2\sqrt{\triangle}}(2(z+\gamma+\xi-2)+4(1-\gamma)(1-\xi))}{2z(1-z)}\\
		&\quad+\frac{(z+\gamma+\xi-2-\sqrt{\triangle})(2z-1)}{2z^2(z-1)^2}+\frac{\gamma-1}{z^2}\\
		&=\frac{1}{2z^2(z-1)^2}[\triangle_1+(2\gamma\xi-\gamma-\xi)\triangle_2 -\triangle_3+\triangle_4],
		\end{align}
		where
		\begin{align*}
		\begin{cases}
		\triangle_1 = \frac{z^2(z-1)}{\sqrt\triangle}\\
		\triangle_2 = \frac{z(z-1)}{\sqrt\triangle}\\
		\triangle_3 = (2z-1)\sqrt{\triangle}\\
		\triangle_4 = z(1-z)+(z+\gamma+\xi-2)(2z-1)+2(\gamma-1)(z-1)^2.
		\end{cases}
		\end{align*}
		According to L'Hospital rule, 
		\begin{align}
		\label{EqnExpression2theta2}
		\gamma \theta_{2,h} = \lim_{z\rightarrow0}\frac{\triangle''_1+(2\gamma\xi-\gamma-\xi)\triangle''_2 -\triangle''_3+\triangle''_4}{2(12z^2-12z+2)}=\lim_{z\rightarrow0}\frac{\triangle''_1+(2\gamma\xi-\gamma-\xi)\triangle''_2 -\triangle''_3+\triangle''_4}{4}\,,
		\end{align}
		where $\triangle''_i$ denotes the second derivative of $\triangle_i$ with respect to~$z$. After some calculations, we find that
		\begin{align*}
		\triangle''_1|_{z=0}&=-\frac{2}{\xi-\gamma}\,,\\
		\triangle''_2|_{z=0}&=\frac{2}{\xi-\gamma}+\frac{4\gamma\xi-2\gamma-2\xi}{(\xi-\gamma)^3}\,,\\
		\triangle''_3|_{z=0}&=\frac{4(2\gamma\xi-\gamma-\xi)-1}{\xi-\gamma}+\frac{(2\gamma\xi-\gamma-\xi)^2}{(\xi-\gamma)^3}\,,\\
		\triangle''_4|_{z=0}&=2(2\gamma-1)\,.
		\end{align*}
		Using~\eqref{EqnExpression2theta2}, it follows that
		\begin{align*}
		\gamma \theta_{2,h} = \frac{1}{4} \left(\frac{-(2\gamma-1)^2}{\xi-\gamma}+\frac{(2\gamma\xi-\gamma-\xi)^2}{(\xi-\gamma)^3}\right) = \frac{\gamma(1-\gamma)(\gamma^2+\xi-2\gamma\xi)}{(\xi-\gamma)^3}\,,
		\end{align*}
		and finally, we obtain the claimed expression, that is, $\theta_{2,h} = \frac{(1-\gamma)(\gamma^2+\xi-2\gamma\xi)}{(\xi-\gamma)^3}$.
	\end{enumerate}
	
	\subsection{Proof of Theorem \ref{TheoremIHSHaar}}
	\label{ProofTheoremIHSHaar}
	\begin{proof}
		Let $\{S_t\}$ be a sequence of independent $m \times n$ Haar matrices, and let $\{x_t\}$ be the sequence of iterates generated by the update~\eqref{EqnIHSUpdate} with $\mu_t = \theta_{1,h}/\theta_{2,h}$ and $\beta_t=0$. Recall that we denote $\Delta_t = U^\top A (x_t - x^*)$, where $A = U \Sigma V^\top$ is a thin singular value decomposition of $A$. For $t \gre 0$, we have that
		\begin{align*}
		A \left(A^\top S^\top S A\right)^{-1} A^\top &= U \Sigma V^\top \left(V \Sigma U^\top S^\top S U \Sigma V^\top \right)^{-1} V \Sigma U^\top\\
		&= U \Sigma V^\top V \Sigma^{-1} (U^\top S^\top S U)^{-1} \Sigma^{-1} V V^\top \Sigma U^\top\\
		&= U (U^\top S^\top S U)^{-1} U^\top
		\end{align*}
		Multiplying both sides of the update formula~\eqref{EqnIHSUpdate} by $A$, subtracting $A x^*$ and using the normal equation $A^\top A x^* = A^\top b$, we find that
		\begin{align}
		\label{EqnRecError1}
		A (x_{t+1} - x^*) = \left(I_n - \mu_t U (U^\top S_t^\top S_t U)^{-1} U^\top \right) A (x_t - x^*)\,.
		\end{align}
		Multiplying both sides of~\eqref{EqnRecError1} by $U^\top$, using the definition of $\Delta_t$ and the fact that $U^\top U = I_d$, it follows that
		\begin{align*}
		\Delta_{t+1} &= U^\top \left(I_n - \mu_t U (U^\top S_t^\top S_t U)^{-1} U^\top \right) A(x_t-x^*)\\
		&= \left(U^\top - \mu_t U^\top U (U^\top S_t^\top S_t U)^{-1} U^\top \right) (Ax_t-x^*)\\
		&= \left(I_d - \mu_t (U^\top S_t^\top S_t U)^{-1}\right) \Delta_t\,,
		\end{align*}
		and then, taking the squared norm,
		\begin{align*}
		\|\Delta_{t+1}\|^2 = \Delta_t^\top \left(I_d - \mu_t (U^\top S_t^\top S_t U)^{-1}\right)^2 \Delta_t\,.
		\end{align*}
		Taking the expectation with respect to $S_t$ and using the independence of $S_t$ with respect to $S_0, \dots, S_{t-1}$, we obtain that
		\begin{align}
		\Exs_{S_t} \left[\|\Delta_{t+1}\|^2\right] &= \Delta_t^\top \Exs\left[\left(I_d - \mu_t (U^\top S_t^\top S_t U)^{-1}\right)^2\right] \Delta_t\\
		&= \Delta_t^\top \Big(I_d - 2 \mu_t\,\Exs\left[(U^\top S_t^\top S_t U)^{-1}\right] + \mu_t^2\,\Exs\left[(U^\top S_t^\top S_t U)^{-2}\right]\Big) \Delta_t \label{EqnRecError2}\,.
		\end{align}
		We write the spectral decomposition $U^\top S_t^\top S_t U = V \Sigma V^\top$ where $\Sigma$ is diagonal with positive entries $\lambda_{1}, \dots, \lambda_{d}$ and $V_t = [v_{1},\dots,v_{d}]$ is a $d \times d$ orthogonal matrix. The matrix $S_t U$ is distributed as the $m \times d$ upper-left block of an $n \times n$ Haar matrix. Therefore, $S_t U$ is right rotationally invariant, and so is the matrix $V$. It follows that $\lambda_i v_{ik} v_{i\ell} \overset{\mathrm{d}}{=} - \lambda_i v_{ik}v_{i\ell}$ for any index $i$ and any indices $k \!\neq\! \ell$. Then, for any $p \in \{1,2\}$ and any $k \neq \ell$, we have
		\begin{align*}
		\Exs\left[\left((U^\top S^\top S U)^{-p} \right)_{k\ell}\right] = \sum_{i=1}^d \Exs\left[\lambda_i^{-p} v_{ik} v_{i\ell} \right] = - \sum_{i=1}^d \Exs\left[\lambda_i^{-p} v_{ik} v_{i\ell}\right]\,,
		\end{align*}
		which implies that the off-diagonal term $\Exs\left[\left((U^\top S^\top S U)^{-p} \right)_{k\ell}\right]$ is equal to $0$. Further, by permutation invariance of the matrix $V$, we get that for any $k$,
		\begin{align*}
		\Exs\left[\left((U^\top S^\top S U)^{-p} \right)_{kk}\right]= \frac{1}{d} \trace \Exs\left[(U^\top S^\top S U)^{-p}\right]\,,
		\end{align*}
		or equivalently, $\Exs\left[(U^\top S^\top S U)^{-p}\right] = \theta_{p,n} I_d$ where $\theta_{p,n} \!\defn\! d^{-1} \trace \Exs\left[(U^\top S^\top S U)^{-p}\right]$. Then, using~\eqref{EqnRecError2}, it follows that
		\begin{align*}
		\Exs_{S_t} \left[\|\Delta_{t+1}\|^2\right] &= \Delta_t^\top \Big(I_d - 2 \mu_t\,\theta_{1,n} I_d + \mu_t^2\,\theta_{2,n} I_d \Big) \Delta_t\\
		&= (1-2 \mu_t \theta_{1,n} + \mu_t^2 \theta_{2,n}) \cdot \|\Delta_t\|^2\\
		&= \left(1-\frac{{\theta_{1,n}}^2}{\theta_{2,n}} + \left(\frac{\theta_{1,n}}{\sqrt{\theta_{2,n}}} - \mu_t \sqrt{\theta_{2,n}}\right)^2\right) \cdot \|\Delta_t\|^2\,.
		\end{align*}
		By induction, we further obtain
		\begin{align*}
		\frac{\Exs \left[\|\Delta_t\|^2\right]}{\|\Delta_0\|^2} = \prod_{j=0}^{t-1} \left(1-\frac{{\theta_{1,n}}^2}{\theta_{2,n}} + \left(\frac{\theta_{1,n}}{\sqrt{\theta_{2,n}}} - \mu_j \sqrt{\theta_{2,n}}\right)^2\right)\,.
		\end{align*}
		Taking the limit $n \to \infty$ and using the definition $\theta_{h,p} = \lim_{n\rightarrow\infty} \theta_{p,n}$ for $p\in\{1,2\}$, we find that
		\begin{align*}
		\lim_{n\to\infty}\,\frac{\Exs \left[\|\Delta_t\|^2\right]}{\|\Delta_0\|^2} = \prod_{j=0}^{t-1} \left(1-\frac{{\theta_{1,h}}^2}{\theta_{2,h}} + \left(\frac{\theta_{1,h}}{\sqrt{\theta_{2,h}}} - \mu_j \sqrt{\theta_{2,h}}\right)^2\right)\,.
		\end{align*}
		The above right-hand side is minimized at $\mu_j = \theta_{1,h} / \theta_{2,h}$ for all times steps $j \gre 0$, which yields the error formula
		\begin{align*}
		\lim_{n\to\infty}\,\frac{\Exs \left[\|\Delta_t\|^2\right]}{\|\Delta_0\|^2} = \left(1 - \frac{{{\theta_{1,h}}^2}}{\theta_{2,h}}\right)^t\,.
		\end{align*}
		Plugging-in the expressions of $\theta_{1,h}$ and $\theta_{2,h}$, we obtain the claimed convergence rate $\rho_h$.
		
		It remains to prove that $\rho_h$ is the best rate one may achieve with the update~\eqref{EqnIHSUpdate} along with Haar embeddings. It is actually an immediate consequence of Theorem~2 in~\cite{lacotte2019faster} whose assumptions (precisely, Assumption 1 in~\cite{lacotte2019faster}) are trivially satisfied by Haar embeddings.
		
	\end{proof}
	
	\subsection{Calculations of $\theta_{1,h}$ and $\theta_{2,h}$ for SRHT}
	\label{sec: moment SRHT}
	Our analysis proceeds in a way similar to the analysis of the Haar case, and we describe in this paragraph the main steps. Denote by $F_S$ the l.s.d.~of $U^\top S^\top SU$ and by $F_{S,n}$ its e.s.d.~As we did for the Haar case with the matrix $C_n$, we introduce here an auxiliary matrix $G_n$ whose e.s.d.~is related to $F_{S,n}$. Then, we characterize the $\eta$-transform $\eta_G$ of its l.s.d.~$F_G$. Our analysis for $\eta_G$ uses recent results on \emph{asymptotically liberating sequences} from free probability \cite{anderson2014asymptotically}. This technique has also been used in the prior work \cite{dobriban2019asymptotics}. Finally, we show that $\eta_G$ is equal to the $\eta$-transform $\eta_C$ of $F_C$, and we conclude that $F_S=F_h$. 
	
	Let $S \!=\! B H_n D P$ be the $n \times n$ SRHT matrix (before discarding the rows) as defined in Section \ref{SectionSRHT} in the paper, and $U$ be an $n \times d$ deterministic matrix with orthonormal columns. Note that whether we consider the zero rows or not in the matrix $S$, the matrix $U^\top S^\top S U$ remains the same, and so does its l.s.d.~The matrices $B, H_n$ and $D$ are all symmetric matrices, and they respectively satisfy $B^2 = B$, $H_n^2 = I_n$ and $D^2 = I_n$, and $P$ is also an orthogonal matrix. Then, we have that $S^\top S = P^\top DH_nBH_nDP$, and further,
	\begin{align*}
	(S^\top S)^2 &= P^\top DH_nBH_nDPP^\top DH_nBH_nDP = P^\top DH_nBH_nDP = S^\top S\,.
	\end{align*}
	We first have the following observation, whose proof is deferred to Appendix~\ref{AppendixLemmaHadamardEqualDist}.
	\begin{lemma} 
		\label{LemmaHadamardEqualDist}
		For $P$, $B$, $D$, $H_n$ and $U$ defined as above, we have the following equality in distribution
		\begin{align}
		U^\top (P^\top DH_n) B (HDP) U \stackrel{\mathrm{d}}{=} U^\top  (P^\top DH_nDP) B (P^\top DH_nDP) U\,.
		\end{align}
	\end{lemma}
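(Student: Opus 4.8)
The plan is to make the permutation matrix $P$ disappear from both sides by conjugation, and then to exploit the exchangeability of the i.i.d.\ diagonal entries of $B$ and $D$. (Throughout I read the unsubscripted $H$ on the left-hand side as $H_n$.) The first step is purely algebraic: inserting copies of $PP^\top = I_n$ between consecutive factors yields the \emph{exact} identities
\begin{align*}
P^\top D H_n B H_n D P &= \widetilde D\,\widehat H\,\widetilde B\,\widehat H\,\widetilde D,
& P^\top D H_n D P &= \widetilde D\,\widehat H\,\widetilde D,
\end{align*}
where $\widetilde D \defn P^\top D P$, $\widetilde B \defn P^\top B P$ and $\widehat H \defn P^\top H_n P$. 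Hence the left-hand side of the lemma equals $U^\top \widetilde D\,\widehat H\,\widetilde B\,\widehat H\,\widetilde D\,U$, while the right-hand side equals $U^\top (\widetilde D\,\widehat H\,\widetilde D)\,B\,(\widetilde D\,\widehat H\,\widetilde D)\,U$. Since $\widetilde D$ is a diagonal sign matrix it commutes with the diagonal matrix $B$ and $\widetilde D^2 = I_n$, so $\widetilde D B \widetilde D = B$, and the right-hand side collapses to $U^\top \widetilde D\,\widehat H\,B\,\widehat H\,\widetilde D\,U$.

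After this reduction the two expressions differ only in that the left-hand side carries $\widetilde B = P^\top B P$ where the right-hand side carries the original $B$; both are diagonal matrices of i.i.d.\ $\mathrm{Bernoulli}(m/n)$ variables. The second step is therefore to argue that $\widetilde B$ and $B$ play interchangeable roles. I would show that $\widetilde B$, $\widetilde D$ and $P$ are mutually independent with $\widetilde B \stackrel{\mathrm d}{=} B$ and $\widetilde D \stackrel{\mathrm d}{=} D$: conditionally on $P$ each of $\widetilde B, \widetilde D$ is a fixed permutation applied to an i.i.d.\ sequence, which leaves its law unchanged, and the joint probabilities factor because $B$ and $D$ are exchangeable and independent of $P$. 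Since $\widehat H = P^\top H_n P$ is a deterministic function of $P$, it follows that $\widetilde B \perp (\widetilde D, \widehat H)$ with $\widetilde B \stackrel{\mathrm d}{=} B$, while trivially $B \perp (\widetilde D, \widehat H)$. The pair $(\widetilde D, \widehat H)$ is literally the same random object in both expressions, so $(\widetilde B, \widetilde D, \widehat H)$ and $(B, \widetilde D, \widehat H)$ have the same joint distribution; applying the common measurable map $(X, Y, Z) \mapsto U^\top Y Z X Z Y U$ then gives the claimed equality in distribution.

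Most of this is bookkeeping, and the step I expect to require the most care is the mutual independence of $(\widetilde B, \widetilde D, P)$ — the fact that randomly permuting an i.i.d.\ diagonal by an independent uniform permutation returns an i.i.d.\ diagonal that is, moreover, independent of the permutation used. This exchangeability input is exactly what licenses swapping $\widetilde B$ for $B$, and is therefore the crux; everything else is rearrangement using $PP^\top = I_n$, $\widetilde D^2 = I_n$, and commutativity of diagonal matrices. Note that this distributional equality is precisely what lets one replace the factorization $S = B H_n D P$ by $S = B\,(P^\top D H_n D P)$ when computing the l.s.d.\ of $U^\top S^\top S U$, thereby putting it into the bi-signed-permutation Hadamard form to which the asymptotic-freeness results of~\cite{anderson2014asymptotically} apply.
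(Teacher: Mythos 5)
Your proof is correct and follows essentially the same route as the paper's: rearrange using $PP^\top = I_n$, commutativity of diagonal matrices and $\widetilde D^2 = I_n$, then replace the permutation-conjugated Bernoulli diagonal by $B$ via its distributional invariance. Your explicit verification that $(\widetilde B, \widetilde D, P)$ are mutually independent with $\widetilde B \stackrel{\mathrm{d}}{=} B$ is exactly the fact the paper invokes (more tersely) when it asserts $\widetilde B \stackrel{\mathrm{d}}{=} B$ in its final step, so the two arguments differ only in bookkeeping (you conjugate $H_n$ by $P$ as well, the paper keeps the outer factors fixed and conjugates only $B$).
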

	We now proceed with asymptotic statements, and we introduce the subscript $n$ to all matrices. We set $W_n \defn P_n^\top D_n H_n D_n P_n$. It holds that the matrix $U_n^\top W_n B_n W_n U_n$ has the same nonzero eigenvalues as $G_n \defn B_n W_n U_n U_n^\top W_n B_n$, so that we first find the l.s.d.~of the matrix $G_n$. The reader may notice that $G_n$ plays a similar role in the analysis of the SRHT case, to that of the matrix $C_n$ in the analysis of the Haar case.
	
	The following result states the asymptotic freeness of the matrices $B_n$ and $W_n U_n U_n^\top W_n$. Its proof follows directly from Corollaries~3.5 and~3.7 by~\cite{anderson2014asymptotically}. 
	\begin{lemma}
		\label{LemmaFreenessHadamardMatrix}
		Let $B_n, W_n, U_n$ be defined as above. Then, the matrices $\{B_n, W_n U_n U_n^\top W_n\}$ are asymptotically free in the limit of the non-commutative probability spaces of random matrices. Consequently, the e.s.d.~of the matrix $G_n=B_n W_n U_n U_n^\top W_n B_n$ converges to the freely multiplicative convolution of the l.s.d.~$F_B$ of $B_n$ and the l.s.d.~$F_U$ of $U_n U_n^\top$, that is, $G_n$ has l.s.d.~given by $F_G = F_B \boxtimes F_U$.
	\end{lemma}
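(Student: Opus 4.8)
The plan is to deduce the lemma almost entirely from the theory of asymptotically liberating sequences of~\cite{anderson2014asymptotically}, which is built precisely to yield asymptotic freeness after conjugating deterministic matrices by structured random unitaries such as sign-and-permutation randomized Hadamard matrices. First I would record the two relevant limiting spectral distributions. The matrix $U_n U_n^\top$ is the deterministic $n \times n$ orthogonal projection onto the $d$-dimensional column span of $U_n$; it has operator norm $1$ and e.s.d. $\frac{d}{n}\delta_1 + (1-\frac{d}{n})\delta_0 \to F_U \defn \gamma\,\delta_1 + (1-\gamma)\,\delta_0$. The sampling matrix $B_n$ is diagonal with i.i.d. $\mathrm{Bernoulli}(m/n)$ entries, independent of $(D_n,P_n)$, has operator norm at most $1$, and by the strong law of large numbers its e.s.d. converges almost surely to $F_B \defn \xi\,\delta_1 + (1-\xi)\,\delta_0$. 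Both families are thus uniformly bounded in operator norm with compactly supported limits, so all their moments are controlled, and $W_n = P_n^\top D_n H_n D_n P_n$ is a real orthogonal matrix, being a product of orthogonal factors.

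Next I would invoke the liberation property of the randomized Hadamard construction. The matrix $W_n$ is exactly the bi-signed, permuted Walsh--Hadamard matrix whose randomization is shown in~\cite{anderson2014asymptotically} to be asymptotically liberating: in the non-commutative probability space of $n\times n$ random matrices with $\tau_n(X) = \frac{1}{n}\Exs[\trace X]$, conjugating any uniformly bounded Hermitian sequence with convergent e.s.d. by $W_n$ makes it asymptotically free from any second such sequence that is independent of $(D_n,P_n)$. Applying Corollaries~3.5 and~3.7 of~\cite{anderson2014asymptotically} with the sequences $B_n$ and $W_n\,U_n U_n^\top\,W_n$ (using $W_n^\top = W_n^{-1}$) yields that $\{B_n,\ W_n U_n U_n^\top W_n\}$ is asymptotically free, where $B_n$ has l.s.d. $F_B$ and $W_n U_n U_n^\top W_n$ has l.s.d. $F_U$, since conjugation by an orthogonal matrix leaves the eigenvalues unchanged.

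It then remains to pass from asymptotic freeness to the claimed l.s.d. of $G_n$. Setting $M_n \defn W_n U_n U_n^\top W_n$ and using that $B_n$ is an orthogonal projection, hence $B_n = B_n^{1/2}$, we have $G_n = B_n M_n B_n = B_n^{1/2} M_n B_n^{1/2}$. By asymptotic freeness of $B_n$ and $M_n$ together with the uniform operator-norm bound, the moments $\tau_n(G_n^k)$ converge to the corresponding free moments, which by definition are the moments of the free multiplicative convolution $F_B \boxtimes F_U$; since both factors are supported in $[0,1]$, the measure $F_B \boxtimes F_U$ is compactly supported and hence determined by its moments, so the e.s.d. of $G_n$ converges weakly to $F_G = F_B \boxtimes F_U$, as claimed.

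The main obstacle is not in this chain of reductions but in what is being cited: one must know that the signed-permutation Hadamard sequence genuinely satisfies the technical definition of an asymptotically liberating sequence — the required decay of the mixed trace functionals $\Exs\,|\tau_n(W_n A_1 W_n^\top C_1 \cdots W_n A_k W_n^\top C_k)|$ — and that one of the two families (here $B_n$) may be random rather than deterministic. Both points are exactly what~\cite{anderson2014asymptotically} establishes, the Hadamard case being one of their central examples and the independent-randomness extension being the content of their Corollary~3.7, so our task reduces to checking that our normalizations, independence structure, and boundedness match their hypotheses and then quoting the result; the genuinely new work in the SRHT analysis lies in the equality-in-distribution reduction of Lemma~\ref{LemmaHadamardEqualDist} and the later identification $\eta_G = \eta_C$, not in this lemma.
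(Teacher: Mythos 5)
Your proposal is correct and follows essentially the same route as the paper, which likewise derives the asymptotic freeness of $\{B_n,\,W_n U_n U_n^\top W_n\}$ directly from Corollaries~3.5 and~3.7 of~\cite{anderson2014asymptotically} and then identifies the l.s.d.\ of $G_n$ as the free multiplicative convolution $F_B \boxtimes F_U$. The extra details you supply (the explicit two-point limits $F_B$ and $F_U$, the orthogonality of $W_n$, and the moment-convergence/compact-support argument for passing from freeness to weak convergence of the e.s.d.) are standard fill-ins that the paper leaves implicit.
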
 
	Since the density of the l.s.d.~$F_B$ is $f_B = \xi\delta_1+(1-\xi)\delta_0$ and and the density of $F_U$ is $f_U = \gamma \delta_1 + (1-\gamma) \delta_0$, we have that the $S$-transforms $S_B$ of $F_B$ and $S_U$ of $F_U$ are respectively equal to $S_{B}(y)=\frac{y+1}{y+\xi}$ and $S_U(y) = \frac{y+1}{y+\gamma}$. From Lemma~\ref{LemmaFreenessHadamardMatrix}, it follows that the $S$-transform $S_G$ of $F_G$ is the product of $S_B$ and $S_U$, i.e.,
	\begin{align}
	\label{EqnFreenessmuCmuXmuB}
	S_{G}(y) = S_{U}(y)S_{B}(y) = \frac{(y+1)^2}{(y+\xi)(y+\gamma)}\,.
	\end{align}
	First, note that using their respective definitions, the $S$-transform of $F_G$ and its $\eta$-transform $\eta_G$ are related by the equation $\eta_{G}\!\left(-\frac{y}{y+1}S_{G}(y)\right) = y+1$. Plugging-in the expression~\eqref{EqnFreenessmuCmuXmuB} of $S_G(y)$ into the latter equation, we obtain that 
	\begin{align*}
	\eta_{G}\!\left(-\frac{y(y+1)}{(y+\gamma)(y+\xi)}\right) = y+1\,.
	\end{align*}
	Letting $z=-\frac{(y+\gamma)(y+\xi)}{y(y+1)}$ and using the relationship~\eqref{EqnEtaTransform} between the Stieltjes and $\eta$-transforms, we find that the Stieltjes transform $m_G$ of $G$ is equal to
	\begin{align*}
	m_G(z) = \frac{z+\gamma+\xi-2 - \sqrt{g(z)}}{2z(1-z)}\,,
	\end{align*}
	where $g(z)=(\gamma+\xi-2+z)^2+4(z-1)(1-\gamma)(1-\xi)$. Hence, we get that $m_G(z) = m_C(z)$, that is, $F_G=F_C$.
	
	Further, the matrix $G_n$ has the same non-zero eigenvalues as the matrix $U_n^\top W_n B_n W_n U_n$ which, according to Lemma~\ref{LemmaHadamardEqualDist}, is equal in distribution to $U_n^\top S_n^\top S_n U_n$. Denote by $\lambda_1, \dots, \lambda_{\widetilde d}$ the non-zero eigenvalues of $U_n^\top S_n^\top S_n U_n$, where $\widetilde d$ is itself a random number due to the randomness of non-zero rows $\widetilde m$. Hence, the e.s.d~$F_{G,n}$ of $G_n$ and the e.s.d.~$F_{S,n}$ of $U_n^\top S_n^\top S_n U_n$ satisfy (see Appendix \ref{sec:appidentity})
	%
	%
	\begin{align}
	F_{G_n}(x) \overset{\mathrm{d}}{=} \left(1-\frac{d}{n}\right) \mathbf{1}_{\{x \gre 0\}} + \frac{d}{n} F_{S,n}(x) \label{eq:esdequality}\,.
	\end{align}
	Thus, we obtain that $F_{S,n}$ converges weakly almost surely to the distribution 
	\begin{align}
	F_S(x) \defn \frac{1}{\gamma}\left(F_G(x)-(1-\gamma)\mathbf{1}_{\{x \gre 0\}}\right) = \frac{1}{\gamma}\left(F_C(x)-(1-\gamma)\mathbf{1}_{\{x \gre 0\}}\right)\,.
	\end{align}
	The latter expression is equal to $F_h(x)$ according to~\eqref{EqnFhFc}, so that $F_S(x) = F_h(x)$. The analysis of the traces of the expected first and second inverse moments only involves the limiting distribution (we refer the reader to the proof of the expressions of $\theta_{1,h}$ and $\theta_{2,h}$, in Section~\ref{ProofTraceCalculationsHaar}). Due to the equality $F_h=F_S$, they remain the same with SRHT matrices, which concludes the proof of Lemma~\ref{TheoremSRHTlsd}.

	\subsection{Proof of Theorem~\ref{TheoremIHSSRHT} and \ref{TheoremIHSSRHTb}}
	\label{ProofTheoremIHSSRHT}
	
	Let $\{S_t\}$ be a sequence of independent $m \times n$ SRHT matrices, and let $\{x_t\}$ be the sequence of iterates generated by the update~\eqref{EqnIHSUpdate} with $\mu_t = \theta_{1,h}/\theta_{2,h}$ and $\beta_t=0$. Denote $\Delta_t = U^\top A(x_t - x^*)$ the sequence of error vectors. The proof follows exactly the same lines as for Theorem~\ref{TheoremIHSSRHT} up to the relationship~\eqref{EqnRecError2}, which we recall here,
	\begin{align}
	\Exs_{S_t} \left[\|\Delta_{t+1}\|^2\right] = \Exs_{S_t}\left[\Delta_t^\top \left(I_d - \mu_t\,(U^\top S_t^\top S_t U)^{-1} \right)^2 \Delta_t\right]\,.
	\end{align}
	Denote $Q_t = I_d - \mu_t\,(U^\top S_t^\top S_t U)^{-1}$. It holds that $\Delta_{t+1}=Q_t \Delta_t$ as previously shown. Hence, by induction, we obtain that
	\begin{align}
	\Exs \left[\|\Delta_t\|^2\right] &= \trace \Exs \left[Q_0\dots Q_{t-1} Q_{t-1} \dots Q_0 \Delta_0 \Delta_0^\top \right]\,. \label{eq:SRHTtraceFormula}
	\end{align}
	Using the independence of $\Delta_0$ and the $Q_i$, and the assumption $\Exs\left[\Delta_0 \Delta_0^\top \right]=I_d / d$, it follows that
	\begin{align}
	\Exs \left[\|\Delta_t\|^2\right] &= \frac{1}{d} \trace\Exs \left[ Q_1\dots Q_{t-1}Q_{t-1} \dots Q_0^2 \right]\,.\label{eq:SRHTtraceFormulaNoDelta} 
	\end{align}
	It holds that the matrix $Q_0^2$ is asymptotically free from $Q_{t-1} \dots Q_1$. Therefore, using the trace decoupling relation~\eqref{EqnTraceDecoupling}, we have that
	\begin{align*}
	\lim_{n \to \infty} \Exs \left[\|\Delta_t\|^2\right] &= \lim_{n \to \infty} \frac{1}{d} \trace \Exs \left[Q_1\dots Q_{t-1} Q_{t-1} \dots Q_0^2 \right]\\
	&= \lim_{n \to \infty} \frac{1}{d}\trace \Exs \left[ Q_0^2 \right] \cdot \lim_{n \to \infty} \frac{1}{d} \trace \Exs \left[ Q_2 \dots Q_{t-1} Q_{t-1} \cdots Q_1^2 \right]\,.
	\end{align*}
	Note that $\lim_{n \to \infty} \frac{1}{d}\trace \Exs \left[ Q_0^2 \right] = (1-2\mu_0 \theta_{1,h} + \mu_0^2 \theta_{2,h})$. Repeating the same asymptotic freeness argument between $Q_1^2$ and $Q_{t-1} \dots Q_2$ and plugging-in $\mu_j = \theta_{1,h}/\theta_{2,h}$, we finally obtain the claimed result,
	\begin{align*}
	\lim_{n \to \infty} \Exs\left[\|\Delta_{t+1}\|^2\right] &= \prod_{j=0}^{t-1} \left(1-\mu_j \theta_{1,h} + \mu_j^2 \theta_{2,h} \right)\\
	& = \left(1-\frac{\theta_{1,h}^2}{\theta_{2,h}}\right)^t\,.
	\end{align*}

	The proof of Theorem \ref{TheoremIHSSRHTb} immediately follows from an alternative upper-bound on the expression \eqref{eq:SRHTtraceFormula} for the norm of the error. In particular, we note that
	\begin{align*}
	\Exs \left[\|\Delta_t\|^2\right] &= \trace \Exs \left[Q_0\dots Q_{t-1} Q_{t-1} \dots Q_0 \Delta_0 \Delta_0^\top \right]\\
	&\le \|\Delta_0\Delta_0^\top\|_2\trace \Exs \left[Q_0\dots Q_{t-1} Q_{t-1} \dots Q_0 \right]\\
	&= d\|\Delta_0\|^2_2 \frac{1}{d}\trace \Exs \left[Q_0\dots Q_{t-1} Q_{t-1} \dots Q_0 \right].
	\end{align*}
	We then combine the earlier expression \eqref{eq:SRHTtraceFormulaNoDelta} with the above upper-bound and complete the proof.

	\begin{remark}
		\label{RemarkOptimalitySRHT}
		In view of equations (4-6) in \cite{anderson2014asymptotically}, one can show that asymptotic freeness between $U^\top S^\top S U$ and a rank-one matrix $v v^\top$ holds provided that $\|v\|_2 < \infty$ as the dimensions grow to infinity. One could then wonder whether such a result can be applied to our setting, in order to remove the assumption $\Exs \Delta_0 \Delta_0^\top = \frac{1}{d} \cdot I_d$. Using~\eqref{eq:SRHTtraceFormula}, dividing by $ \Exs\|\Delta_0\|^2$ and denoting $\wtilde \Delta_0 = \frac{\Delta_0}{\sqrt{\Exs \|\Delta_0\|^2 / d}}$, we get
		\begin{align*}
		\frac{\Exs \|\Delta_t\|^2}{\Exs \|\Delta_0\|^2} = \frac{1}{d} \, \trace \Exs \left[Q_0\dots Q_{t-1} Q_{t-1} \dots Q_0 \wtilde \Delta_0 \wtilde \Delta_0^\top \right]\,.
		\end{align*}
		Provided we have asymptotic freeness between $\wtilde \Delta_0 \wtilde \Delta_0^\top$ and $Q_0 \dots Q_{t-1} Q_{t-1} \dots Q_0$, then we have
		\begin{align*}
		\lim_{n \to \infty} \, \frac{\Exs \|\Delta_t\|^2}{\Exs \|\Delta_0\|^2} = \lim_{n \infty}\frac{1}{d} \, \trace \Exs \left[Q_0\dots Q_{t-1} Q_{t-1} \dots Q_0\right] \cdot \lim_{n \infty}\frac{1}{d} \trace \Exs \left[\wtilde \Delta_0 \wtilde \Delta_0^\top \right]
		\end{align*}
		According to our previous analysis, the term $\lim_{n \infty}\frac{1}{d} \, \trace \Exs \left[Q_0\dots Q_{t-1} Q_{t-1} \dots Q_0\right]$ is equal to $(1-\frac{\theta_{1,h}^2}{\theta_{2,h}})^t$. On the other hand, the term $\lim_{n \infty}\frac{1}{d} \trace \Exs \left[\wtilde \Delta_0 \wtilde \Delta_0^\top \right]$ is equal to $1$, so that we would get the claimed result. But, for asymptotic freeness to hold between $\wtilde \Delta_0 \wtilde \Delta_0^\top$ and $Q_0 \dots Q_{t-1} Q_{t-1} \dots Q_0$, we need $\|\wtilde \Delta_0\| < \infty$, and this assumption seems too strong: for instance, if $\Delta_0$ is deterministic, then $\|\wtilde \Delta_0\| = \sqrt{d}$ which is unbounded as the dimensions grow to infinity.
	\end{remark}

	\section{Proofs of the auxiliary results}

	\subsection{Proof of the bounds on the support of $F_h$ (Lemma \ref{lem: support of Fh})}
	\label{ProofLemmaSupportFh}
	\begin{proof}
		We show that the support of $F_h$ satisfies
		\begin{align*}
		\inf \, \mathrm{supp}(F_h) \gre \frac{\left(1-\sqrt{\rho_g}\right)^2}{(1+\frac{1}{\sqrt{\xi}})^2}\,.
		\end{align*}
		Let $S$ be an $m \times n$ Haar matrix, $U$ an $n \times d$ deterministic matrix with orthonormal columns, and $S_g$ be an $m \times n$ matrix independent of $S$, with i.i.d.~entries $\mathcal{N}(0,1/m)$. Write $S_g=\Omega_\ell \Sigma \Omega_r$ a singular value decomposition of $S_g$. It holds that $\Omega_\ell$ is an $m \times m$ Haar matrix, independent of the $m \times m$ diagonal matrix of singular values $\Sigma$, and $\Omega_r \overset{\mathrm{d}}{=} S$, so that $\Omega_\ell \Sigma S \overset{\mathrm{d}}{=} S_g$. Further, the operator norm of $\Sigma$ satisfies $\lim_{n \to \infty} \|\Sigma\|_2 = \left(1+\frac{1}{\sqrt{\xi}}\right)$ almost surely. Then,
		\begin{align*}
		\sigma_{\min}(SU) = \min_{\|x\|=1} \|SUx\| &\gre \min_{\|x\|=1} \frac{\|\Sigma S U x\|}{\|\Sigma\|_2}\\
		& = \frac{1}{{\|\Sigma\|_2}} \cdot \min_{\|x\|=1}  \|\Omega_\ell  \Sigma S U x\|\,.
		\end{align*}
		Almost surely, $\min_{\|x\|=1} \|\Omega_\ell \Sigma S x\| \to (1-\sqrt{\rho_g})$ as $n \to \infty$. Thus, almost surely, $\liminf_{n \to \infty}\sigma_{\min}(SU) \gre \frac{\left(1-\sqrt{\rho_g}\right)}{(1+\frac{1}{\sqrt{\xi}})}$, which yields the claimed lower bound on the support of $F_h$.
	\end{proof}

	\subsection{Characterization of the e.s.d. of $C_n$}
	\label{sec: esd of Cn}
	
	Recall the definition~\eqref{EqnCn} of the matrix $C_n$,
	\begin{align*}
	C_n = \left(\begin{array}{cc}I_m & 0\\0 & 0\end{array}\right)
	\mathbb{W}_n
	\left(\begin{array}{cc} I_d & 0\\0&0\end{array}\right)
	\mathbb{W}_n^\top
	\left(\begin{array}{cc}I_m & 0\\0 & 0\end{array}\right)\,.
	\end{align*}

	We leverage Theorem 4.11 from~\cite{couillet2011random}, which we recall for the sake of completeness. 
	\begin{theorem}[Theorem 4.11,~\cite{couillet2011random}]
		\label{TheoremCouillet}
		Let $D_n \in \real^{n \times n}$ and $T_n \in \real^{n \times n}$ be diagonal non-negative matrices, and $\mathbb{W}_n \in \real^{n \times n}$ be a Haar matrix. Denote $F_D$ and $F_T$ the respective l.s.d.~of $D_n$ and $T_n$. Denote $C_n$ the matrix $C_n \defn D_n^\frac{1}{2} \mathbb{W}_n T_n \mathbb{W}_n^\top D_n^\frac{1}{2}$. Then, as $n$ tends to infinity, the e.s.d.~of $C_n$ converges to $F$ whose $\eta$-transform $\eta_F$ satisfies
		\begin{align*}
		\eta_F(z) &= \int \frac{1}{z \gamma(z) x + 1} \,\mathrm{d}F_D(x)\,,\\
		\gamma(z) &= \int \frac{x}{\eta_F(z) + z \delta(z) x} \, \mathrm{d}F_T(x)\,,\\
		\delta(z) &= \int \frac{x}{z \gamma(z) x + 1} \mathrm{d}F_D(x)\,.
		\end{align*}
	\end{theorem}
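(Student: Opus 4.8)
\emph{Proof proposal.} The plan is to prove Theorem~\ref{TheoremCouillet} by a resolvent analysis combined with the integration-by-parts formula for the Haar measure, with a free-probability argument as an independent cross-check. Write $R(z) \defn (I_n + z C_n)^{-1}$ for $C_n = D_n^{1/2}\mathbb{W}_n T_n \mathbb{W}_n^\top D_n^{1/2}$, and recall that the target quantity is $\eta_F(z) = \lim_{n\to\infty}\frac1n\trace R(z)$, for $z$ in a domain where $I_n+zC_n$ is invertible (e.g. $z>0$, or $z$ with a small nonzero imaginary part). Expanding the resolvent identity $R = I_n - z\,D_n^{1/2}\mathbb{W}_n T_n \mathbb{W}_n^\top D_n^{1/2} R$ and taking normalized traces reduces the problem to controlling quantities of the form $\frac1n\trace\!\big(\mathbb{W}_n T_n \mathbb{W}_n^\top M_n\big)$ with $M_n = D_n^{1/2} R\, D_n^{1/2}$, which is itself a smooth function of $\mathbb{W}_n$.

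First I would prove a \emph{decoupling lemma}: for a Haar matrix $\mathbb{W}_n$, a deterministic $T_n$, and a matrix $M_n$ adapted to $\mathbb{W}_n$ with bounded resolvent-type norm, one has $\frac1n\trace\!\big(\mathbb{W}_n T_n \mathbb{W}_n^\top M_n\big) = \big(\tfrac1n\trace(T_n\Phi_n)\big)\big(\tfrac1n\trace M_n\big) + o(1)$ for a suitable correction matrix $\Phi_n$, together with the analogous identities obtained by inserting $\mathbb{W}_n T_n\mathbb{W}_n^\top$ more deeply into $R$. The natural tool is the Haar integration-by-parts (Stein / Weingarten) identity expressing $\Exs[(\mathbb{W}_n)_{ij}\,g(\mathbb{W}_n)]$ through the derivatives $\partial g/\partial(\mathbb{W}_n)_{k\ell}$, applied to $g$ built out of $\mathbb{W}_n T_n \mathbb{W}_n^\top$ and $R$; differentiating and resumming produces exactly two scalar unknowns — the ``channels'' $\gamma(z)$ and $\delta(z)$ through which $F_T$ and $F_D$ enter — and shows that $D_n^{1/2}RD_n^{1/2}$ is, to leading order, $\big(z\gamma(z)D_n + I_n\big)^{-1}D_n$ with $\gamma(z) = \frac1n\trace\!\big(T_n(\eta_F(z)I_n + z\delta(z)T_n)^{-1}\big)$ and $\delta(z) = \frac1n\trace\!\big(D_n(z\gamma(z)D_n+I_n)^{-1}\big)$. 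Passing to the limit $n\to\infty$ and rewriting the normalized traces as integrals against $F_D$ and $F_T$ then yields the three displayed equations; along the way one checks the relation $z\gamma(z)\delta(z) = 1-\eta_F(z)$, which shows that $\delta$ is determined by $(\eta_F,\gamma)$ and that the system is effectively two-dimensional.

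The second half is \emph{self-consistency and uniqueness}: the system must be shown to have a unique solution $(\eta_F,\gamma,\delta)$ in the class of functions analytic off the support with the correct sign / imaginary-part behaviour (a Stieltjes-transform-type class), for instance by exhibiting the associated map as a strict contraction on that class for $z$ in a suitable region and then extending by analytic continuation. Combined with standard concentration — the normalized traces appearing above concentrate around their means at rate $O(n^{-1/2})$, by Lipschitz continuity of resolvents in $\mathbb{W}_n$ and concentration of measure on the orthogonal group — this upgrades the approximate identities above to the almost-sure weak convergence of the e.s.d.~of $C_n$ to the measure whose $\eta$-transform solves the system.

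I expect the main obstacle to be the decoupling lemma with fully controlled errors: making the Haar integration-by-parts rigorous when $M_n$ itself depends on $\mathbb{W}_n$, tracking the resulting cascade of correction terms, and showing they are of lower order uniformly in $n$ and locally uniformly in $z$; establishing uniqueness of the fixed point is the other delicate point, whereas the resolvent algebra, the rewriting of traces as integrals, and the concentration estimates are routine. As a sanity check one can rederive the same system from free probability: $\mathbb{W}_n T_n \mathbb{W}_n^\top$ is asymptotically free from $D_n$, so $C_n$ has l.s.d.~$\mu_D\boxtimes\mu_T$ and hence $S$-transform $S_{\mu_D}S_{\mu_T}$; feeding this product relation through the standard $\eta$/$S$-transform dictionary reproduces precisely the three fixed-point equations and selects the correct solution branch.
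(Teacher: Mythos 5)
This statement is not proved in the paper at all: it is an external result, quoted verbatim (in specialized form) from Theorem 4.11 of Couillet and Debbah and invoked as a black box in Appendix~\ref{sec: esd of Cn} to characterize the e.s.d.\ of $C_n$. So there is no internal proof to compare yours against; what you have written is essentially a reconstruction of the proof strategy of the cited reference itself: a resolvent/deterministic-equivalent analysis in which Haar integration by parts (Weingarten-type identities) generates the two scalar channels $\gamma(z)$ and $\delta(z)$, a uniqueness argument for the resulting fixed-point system in a Stieltjes-type class, and concentration on the orthogonal group to pass from expectations to almost-sure weak convergence. Your algebra is internally consistent: the relation $z\gamma(z)\delta(z)=1-\eta_F(z)$ does follow from the first and third equations, the deterministic equivalent $R(z)\approx(I_n+z\gamma(z)D_n)^{-1}$ reproduces all three displayed equations in the limit, and the sanity check $T_n=I_n$ collapses correctly to $\eta_F(z)=\int(1+zx)^{-1}\,\mathrm{d}F_D(x)$. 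The free-probability cross-check is also sound, and is in fact closer in spirit to how the paper argues elsewhere: asymptotic freeness is exactly the mechanism used in Lemma~\ref{LemmaFreenessHadamardMatrix} for the SRHT case.

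Two points to watch if you carried this out in full. First, in the application made of the theorem here both $F_D$ and $F_T$ are Bernoulli-type laws with atoms at $0$ (densities $\xi\delta_1+(1-\xi)\delta_0$ and $\gamma\delta_1+(1-\gamma)\delta_0$), so the $S$-transform step of your cross-check requires the extended definition of the $S$-transform for measures with mass at zero, and the $\eta$/$S$ dictionary must be applied on the correct branch — the same branch-selection issue the paper handles when inverting \eqref{EqnetaCfixedpoint}. Second, for the real orthogonal group the integration-by-parts identities carry additional transpose terms relative to the unitary case; these are lower order but must be bounded uniformly in $n$ and locally uniformly in $z$. Neither issue breaks the plan; as you yourself acknowledge, the genuinely hard parts are the quantitative decoupling lemma and uniqueness of the fixed point, which is precisely the content the cited reference supplies and which this paper chooses to import rather than reprove.
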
 
	The e.s.d.~of $\left(\begin{array}{cc} I_d & 0\\0 & 0\end{array}\right)$ converges to the distribution $F_\gamma$ with density $\gamma \delta_1 + (1-\gamma) \delta_0$, and the e.s.d.~of $\left(\begin{array}{cc}I_m & 0\\0 & 0\end{array}\right)$ converges to the distribution $F_\xi$ with density $\xi\delta_1+(1-\xi)\delta_0$. Then, according to Theorem~\ref{TheoremCouillet}, the e.s.d. of $C_n$ converges to a distribution $F_C$, whose $\eta$-transform $\eta_C$ is solution of the following system of equations,
	\begin{align}
	\eta_C(z) &= \int\frac{1}{z\gamma(z)x+1}\,\mathrm{d} F_\xi(x)\,,\\
	\gamma(z) &= \int\frac{x}{\eta_C(z)+z\delta(z)x}\,\mathrm{d} F_\gamma(x)\,,\\
	\delta(z) &= \int\frac{x}{z\gamma(z)x+1}\,\mathrm{d} F_\xi(x)\,.
	\end{align}
	Plugging the above expressions of $F_\xi$ and $F_\gamma$ into the above equations, and after simplification, we obtain that $\eta_C$ is solution of the following second-order equation
	\begin{align}
	\label{EqnetaCfixedpoint}
	\eta_C(z) = (1-\gamma) + \frac{\gamma}{1 + z\left(1+\frac{\xi-1}{\eta_{C}(z)}\right)}\,,
	\end{align}
	Plugging the relationship~\eqref{EqnEtaTransform} between the Stieltjes and $\eta$-transforms into~\eqref{EqnetaCfixedpoint}, we find that
	\begin{align}
	\label{Eqnmcundetermined}
	m_C(z) = \frac{z+\gamma+\xi-2- \sqrt{g(z)}}{2z(1-z)}\,,
	\end{align}
	where $g(z)=(\gamma+\xi-2+z)^2+4(z-1)(1-\gamma)(1-\xi)$, and we choose the branch of the square-root such that $m_C(z) \in \mathbb{C}^+$ for $z \in \mathbb{C}^+$, $m_C(z) \in \mathbb{C}^-$ for $z \in \mathbb{C}^-$ and $m_C(z) > 0$ for $z < 0$.

	\subsection{Proof of Lemma~\ref{LemmaHadamardEqualDist}}
	\label{AppendixLemmaHadamardEqualDist}
	\begin{proof}
		Note that both $B$ and $D$ are diagonal matrices whose diagonal entries are i.i.d.~random variables, and $P$ is a permutation matrix. Define $\tilde{B}=P B P^\top$ and $\tilde{D}=P^\top DP$, then
		we have
		$$\tilde{B}\stackrel{d}{=}B,\quad\tilde{D}\stackrel{d}{=}D$$
		and
		\begin{align}
		\label{EqnDP=PD}
		DP=P\tilde{D},\quad P^\top D=\tilde{D}P^\top\,.
		\end{align}
		It follows that
		\begin{align*}
		U^\top P^\top DH_n DPBP^\top DH_n DP U&=U^\top P^\top DH_n P\tilde{D}B\tilde{D}P^\top H_n DP U\\
		&=U^\top  P^\top DH_n PB\tilde{D}^2 P^\top H_n DP U\\
		&=U^\top  P^\top DH_n P BP^\top H_n DP U\\
		&=U^\top  P^\top DH_n\tilde{B} H_n DP U\\
		&\stackrel{d}{=}U^\top P^\top DH_n B H_n DP U,
		\end{align*}
		where the first equation follows from \eqref{EqnDP=PD}, the second equation holds because $\tilde{D}$ and $B$ are diagonal so they commute, while the third equation holds because $\tilde{D}^2 = I_n$.
	\end{proof}

	\subsection{Proof of the identity~\eqref{eq:esdequality}}
	\label{sec:appidentity}
	We note that
	\begin{align*}
	F_{G_n}(x) &\overset{\mathrm{d}}{=} \left(1-\frac{\widetilde d}{n}\right) \mathbf{1}_{\{x \gre 0\}} + \frac{1}{n} \sum_{j=1}^{\widetilde d} \mathbf{1}_{\{x \gre \lambda_j\}}\\
	&= \left(1-\frac{\widetilde d}{n}\right) \mathbf{1}_{\{x \gre 0\}} + \frac{d}{n} \cdot \frac{1}{d} \sum_{j=1}^{\widetilde d} \mathbf{1}_{\{x \gre \lambda_j\}}\\
	&= \left(1-\frac{\widetilde d}{n}\right) \mathbf{1}_{\{x \gre 0\}} + \frac{d}{n} \left(F_{S,n}(x) - \left(\frac{d-\widetilde d}{d}\right) \mathbf{1}_{\{x \gre 0\}}\right)\\
	&= \left(1-\frac{d}{n}\right) \mathbf{1}_{\{x \gre 0\}} + \frac{d}{n} F_{S,n}(x)\,,
	\end{align*}
	which proves \eqref{eq:esdequality}.

\end{appendix}

\end{document}